\newtheorem{theorem}{Theorem}[section]
\newtheorem{lem}[theorem]{Lemma}
\newtheorem{coro}[theorem]{Corollary}
\theoremstyle{definition}
\def\00{\mathbf{0}}
\def\Aut{\hbox{\rm Aut}}
\def\Cay{\hbox{\rm Cay}}
\def\Ga{\Gamma}
\def\Si{\Sigma}
\def\a{\alpha}
\def\l{\lambda}
\begin{document}

\title{Nowhere-zero 3-flows in nilpotently vertex-transitive graphs}
\author[a]{Junyang Zhang}
\author[b]{Sanming Zhou}
\affil[a]{School of Mathematical Sciences, Chongqing Normal University, Chongqing 401331, P. R. China}
\affil[b]{School of Mathematics and Statistics, The University of Melbourne, Parkville, VIC 3010, Australia}
\date{}

\openup 0.5\jot
\maketitle

\renewcommand{\thefootnote}{\fnsymbol{footnote}}
\footnotetext{E-mail address: jyzhang@cqnu.edu.cn (Junyang Zhang), sanming@unimelb.edu.au (Sanming Zhou)}

\begin{abstract}
We prove that every regular graph of valency at least four whose automorphism group contains a nilpotent subgroup acting transitively on the vertex set admits a nowhere-zero $3$-flow.

\medskip
{\em AMS Subject Classification (2020):} 05C21, 05C25
\end{abstract}

\section{Introduction}
\label{sec:int}

All graphs in this paper are finite and undirected, with loops and parallel edges allowed, and all groups considered are finite. A graph with neither loops nor parallel edges will be referred to as a simple graph. As usual, for a graph $G$, we use $V(G)$ and $E(G)$ to denote its vertex and edge sets, respectively, and for $v \in V(G)$ we use $E(v)$ to denote the set of edges of $G$ incident with $v$. An \emph{orientation} of $G$ is a digraph $D$ with vertex set $V(G)$ obtained from $G$ by endowing each edge of $G$ with one of the two possible directions. Thus, each edge of $G$ is turned into an \emph{arc} of $D$. We use $D^{+}(v)$ to denote the set of edges in $E(v)$ with tail $v$ and $D^{-}(v)$ the set of edges in $E(v)$ with head $v$. Let $\varphi$ be an integer-valued function on $E(G)$ and $k$ a positive integer. Write
\begin{equation*}
  \varphi^{+}(v):=\sum\limits_{e\in D^{+}(v)}\varphi(e)~~\mbox{and}~~
  \varphi^{-}(v):=\sum\limits_{e\in D^{-}(v)}\varphi(e)
\end{equation*}
for every $v\in V(G)$. If $|\varphi(e)|<k$ for all $e\in E(G)$ and $\varphi^{+}(v)=\varphi^{-}(v)$ for all $v\in V(G)$, then the ordered pair $(D, \varphi)$ is called a \emph{$k$-flow} of $G$. A $k$-flow $(D, \varphi)$ is \emph{nowhere-zero} if $\varphi(e)\neq0$ for every $e\in E(G)$. It is well known that, if for some orientation $D$ of $G$ there exists an integer-valued function $\varphi$ on $E(G)$ such that $(D, \varphi)$ is a nowhere-zero $k$-flow of $G$, then for any orientation $D'$ of $G$ there exists an integer-valued function $\varphi'$ on $E(G)$ such that $(D', \varphi')$ is a nowhere-zero $k$-flow of $G$. In this case we say that $G$ \emph{admits} a nowhere-zero $k$-flow.  

In the middle 1950s, Tutte made three celebrated conjectures on integer flows which are still open in their general form. One of them is the following $3$-flow conjecture (see, for example, \cite[Conjecture 1.1.8]{Z1997}): Every $4$-edge-connected graph admits a nowhere-zero $3$-flow. In 1979, Jaeger \cite{J1979} conjectured that there is a positive integer $k$ such that every $k$-edge-connected graph admits a nowhere-zero $3$-flow. Jaeger's conjecture was confirmed by Thomassen \cite{Th2012} who proved that the statement is true when $k=8$. This breakthrough was further improved by Lov\'asz et al. \cite{LTWZ2013} who proved that every $6$-edge-connected graph admits a nowhere-zero $3$-flow. The reader is referred to \cite{Jaeger88, Z1997} for theories and results on Tutte's conjectures and integer flows in general.

Let $G$ and $\tilde{G}$ be graphs. A \emph{homomorphism} $\sigma: G\rightarrow\tilde{G}$ is a mapping from
$V(G)\cup E(G)$ to $V(\tilde{G})\cup E(\tilde{G})$ such that (i) the restriction $\sigma |_{V(G)}$ of $\sigma$ to $V(G)$ is a mapping from $V(G)$ to $V(\tilde{G})$; (ii) the restriction $\sigma |_{E(G)}$ of $\sigma$ to $E(G)$ is a mapping from $E(G)$ to $E(\tilde{G})$ (which may map a non-loop edge to a loop); and (iii) for every $e\in E(G)$, $\sigma$ maps the end-vertices of $e$ onto the end-vertices of $\sigma(e)$. If in addition both $\sigma |_{V(G)}$ and $\sigma |_{E(G)}$ are surjective, then we call $\sigma$ a \emph{surjective homomorphism}. A homomorphism $\sigma:G\rightarrow \tilde{G}$ is called an \emph{isomorphism} if both $\sigma |_{V(G)}$ and $\sigma |_{E(G)}$ are bijective. An isomorphism from $G$ to itself is called an \emph{automorphism} of $G$. All automorphisms of $G$ equipped with the usual composition of permutations form a group. This group, denoted by $\Aut(G)$, is called the \emph{automorphism group} of $G$. A graph $G$ is said to be \emph{$\Gamma$-vertex-transitive} if $\Gamma$ is a subgroup of $\Aut(G)$ acting transitively on $V(G)$, and $G$ is called \emph{vertex-transitive} if it is $\Aut(G)$-vertex-transitive. As a convention, we assume that all vertex-transitive graphs in this paper have no loops. A $\Gamma$-vertex-transitive graph $G$ is called a \emph{Cayley graph} on $\Gamma$ if $\Gamma$ is \emph{regular} on $V(G)$ (that is, not only is $\Ga$ transitive on $V(G)$ but also it is \emph{semiregular} on $V(G)$ in the sense that no nonidentity element of $\Ga$ can fix any vertex of $G$). It is well known that every vertex-transitive graph is regular and every connected vertex-transitive simple graph of valency $k\geq 1$ is $k$-edge-connected \cite{M1971}. Thus, when restricted to the class of vertex-transitive graphs, Tutte's $3$-flow conjecture asserts that every vertex-transitive simple graph of valency at least four admits a nowhere-zero 3-flow. In this regard, Tutte's $3$-flow conjecture has been confirmed for Cayley graphs of valency at least four on abelian groups \cite{PSS2005}, nilpotent groups \cite{NS2009}, generalized dihedral groups \cite{LL2015}, generalized quaternion groups \cite{LL2015}, generalized dicyclic groups \cite{AI2019}, groups of order $pq^2$ for any pair of primes $p, q$ \cite{ZZ2022}, and two families of supersolvable groups \cite{ZZ2021}. In \cite{LZ2016}, it was proved that every graph with valency at least four whose automorphism group contains a solvable subgroup acting transitively on its set of arcs admits a nowhere-zero 3-flow, where an arc is an ordered pair of adjacent vertices. In \cite{ZT2022}, Zhang and Tao proved that Tutte's $3$-conjecture is true for simple graphs with order twice an odd number and valency at least four whose automorphism groups contain a solvable vertex-transitive subgroup which contains a central involution.

A graph is said to be \emph{nilpotently vertex-transitive} if its automorphism group contains a nilpotent subgroup acting transitively on its vertex set. Clearly, every Cayley graph on a nilpotent group is nilpotently vertex-transitive, but the converse is not true. The main result in this paper is as follows.

\begin{theorem}
\label{nil}
Every nilpotently vertex-transitive graph of valency at least four admits a nowhere-zero $3$-flow.
\end{theorem}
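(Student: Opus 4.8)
The plan is to reduce Theorem~\ref{nil} to the already-known case of Cayley graphs on nilpotent groups \cite{NS2009}, by exploiting the structure of a transitive nilpotent permutation group. So let $G$ be a nilpotently vertex-transitive graph of valency $k\geq 4$, and let $\Gamma\leq\Aut(G)$ be nilpotent and transitive on $V(G)$. We may assume $G$ is connected, since a nowhere-zero $3$-flow of a graph exists if and only if one exists on each connected component, and each component of a vertex-transitive graph is itself vertex-transitive of the same valency (here it is convenient to note that the setwise stabilizer in $\Gamma$ of a component acts transitively on that component, and remains nilpotent). The point of departure is that a finite nilpotent group is the direct product of its Sylow subgroups, and a transitive nilpotent group need not be regular, so we cannot directly quote \cite{NS2009}; the heart of the argument is to manufacture a semiregular subgroup.

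The key step is to find a subgroup $\Delta\leq\Gamma$ that is semiregular on $V(G)$ and still has all its orbits "large enough". First I would take a point stabilizer $\Gamma_v$ and, using nilpotence, analyze it Sylow by Sylow: for each prime $p$ dividing $|\Gamma|$, the Sylow $p$-subgroup $P$ of $\Gamma$ is normal in $\Gamma$, and $P_v=P\cap\Gamma_v$ is a Sylow $p$-subgroup of $\Gamma_v$. The union over $v$ of the fixed-point analysis lets one peel off a complement: concretely, one looks for a subgroup $\Delta$ with $\Delta\cap\Gamma_v=1$ for all $v$ (equivalently $\Delta$ semiregular) and $\Delta$ transitive on $V(G)$, which would make $G$ a Cayley graph on the nilpotent group $\Delta$ and finish the proof via \cite{NS2009}. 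When no such full complement exists, the orbits of a maximal semiregular $\Delta\trianglelefteq\Gamma$ form a $\Gamma$-invariant partition $\mathcal B$ of $V(G)$, and one passes to the quotient: either the quotient graph $G/\mathcal B$ together with the induced action carries enough structure to build the flow by pulling back a nowhere-zero $3$-flow, or the blocks are small (bounded size, essentially governed by the stabilizer) and a direct argument on $G$ within and between blocks produces the flow. A useful sub-case is when some block has size $2$: then $\Delta$ has index $2$ in a transitive group and $G$ is a "twofold cover" type object over a Cayley graph on a nilpotent group, and one adapts the flow-lifting techniques used for generalized dihedral and dicyclic groups in \cite{LL2015, AI2019}.

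The main obstacle I anticipate is precisely the case where $G$ is \emph{not} a Cayley graph on a nilpotent group, i.e.\ where $\Gamma$ is transitive and nilpotent but has no regular nilpotent subgroup and no regular nilpotent subgroup of $\Aut(G)$ either. Here one must genuinely work with the point stabilizers, which can be nontrivial nilpotent groups, and control how the valency-$\geq 4$ hypothesis interacts with the block structure: the danger is small blocks with few edges crossing between them, where edge-connectivity arguments are delicate. The technical core will be a careful bookkeeping of Sylow subgroups of $\Gamma_v$ — showing that, after factoring out a large semiregular normal subgroup, the residual stabilizer action is so constrained (a $p$-group acting on a small set) that one can always orient and weight the "local" edges to extend a flow from the quotient. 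I would expect to treat $p=2$ separately from odd $p$, since for odd primes a $p$-group acting on a set whose size is coprime to $p$ has many fixed points, which forces semiregularity and collapses the problem to \cite{NS2009}, whereas the even case needs the covering-space style lifting lemmas.

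\begin{proof}[Plan]
See the discussion above; the formal proof follows in the subsequent sections.
\end{proof}
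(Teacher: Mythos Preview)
What you have written is a strategy sketch, not a proof, and the strategy has a genuine gap at exactly the point you flag as ``the main obstacle''. A transitive nilpotent permutation group need not contain a regular subgroup, so the hoped-for reduction to \cite{NS2009} via a semiregular transitive $\Delta$ simply fails in general; you acknowledge this, but your fallback (``pass to the quotient $G/\mathcal B$ and pull back a flow, or do a direct argument within and between blocks'') is not an argument. Pulling back a nowhere-zero $3$-flow from a quotient requires that $G$ be a multicover of that quotient, which is a nontrivial structural condition on how edges meet blocks; you never verify it, and in fact it will typically fail for an arbitrary block system. Your remark that for an odd prime $p$ ``a $p$-group acting on a set whose size is coprime to $p$ has many fixed points, which forces semiregularity'' is also confused: fixed points are the opposite of semiregularity, and in any case this line of thought does not produce a regular nilpotent subgroup.

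The paper's proof takes a quite different route and never tries to make $G$ into a Cayley graph. After reducing to connected $G$ of odd valency, it chooses $\Gamma$ nilpotent transitive of \emph{minimum} order, finds a central involution $\lambda\in\Gamma$ (which exists because $|V(G)|$, hence $|\Gamma|$, is even), and splits $G=G_{u,\lambda}\cup G'$ where $G_{u,\lambda}$ is the $\Gamma$-invariant spanning subgraph generated by $\lambda$. The argument is then an induction on $|V(G)|$ with a case analysis on the valencies of $G_{u,\lambda}$ and $G'$. The two substantive tools are: (i) when $G'$ has odd valency $\ge 5$, the regular quotient $G'_{\langle\lambda\rangle}$ is again nilpotently vertex-transitive of smaller order, and the flow lifts because $G'$ is a genuine cover of it; (ii) when $G'$ has even valency, the minimality of $\Gamma$ forces $\Gamma_u\le\Phi(\Gamma)$, which together with a parity lemma on $|\Gamma_u:\Gamma_u\cap\delta\Gamma_u\delta^{-1}|$ lets one carve out $\Gamma$-invariant $2$-factors and a perfect matching $G_{u,\lambda}$ whose union decomposes into closed ladders, and a separate lemma shows such configurations admit nowhere-zero $3$-flows. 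None of these ingredients appear in your plan; in particular the Frattini-subgroup step and the closed-ladder decomposition are what actually handle the non-Cayley case that your sketch leaves open.
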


Obviously, this result is a generalization of the above-mentioned result of N\'an\'asiov\'a and \v Skoviera \cite[Theorem 4.3]{NS2009} that every Cayley graph of valency at least four on a nilpotent group admits a nowhere-zero $3$-flow.

The rest of the paper is structured as follows. The next section contains some basic definitions and three preliminary results. In Section \ref{sec:cover}, we will discuss multicovers and regular covers of graphs in the context of integer flows, and in Section \ref{sec:tech} we will establish several technical lemmas about vertex-transitive graphs. With these preparations we will present our proof of Theorem \ref{nil} in Section \ref{sec:pf}.

\section{Preparations}
\label{sec:lem}

The reader is referred to \cite{BM2008} and \cite{DM1996, R1995} for graph-theoretical and group-theoretical terminology and notation, respectively. Let $\Ga$ be a group with identity element $1$. An action of $\Ga$ on a set $V$ is a mapping $\Ga \times V \rightarrow V, (\alpha, v) \mapsto \alpha(v)$ such that $1(v) = v$ and $(\beta \alpha)(v) = \beta (\alpha(v))$ for all $\alpha, \beta \in \Ga$ and all $v \in V$. The \emph{stabilizer} of $v$ under the action of $\Ga$ is the subgroup $\Ga_{v} := \{\alpha \in \Ga: \alpha(v) = v\}$ of $\Ga$. The \emph{$\Ga$-orbit} on $V$ containing $v$ is the subset $\{\alpha(v): \alpha \in \Ga\}$ of $V$. It is known that all $\Ga$-orbits on $V$ form a partition of $V$. If there is only one $\Ga$-orbit on $V$, then $\Ga$ is said to be \emph{transitive} on $V$. If only the identity element of $\Ga$ can fix any $v$ in $V$, then $\Ga$ is said to be \emph{semiregular} on $V$. If $\Ga$ is both transitive and semiregular on $V$, then $\Ga$ is said to be \emph{regular} on $V$.

At this point let us reiterate our assumption that all vertex-transitive graphs (in particular, all Cayley graphs) in this paper have no loops, but they may have parallel edges.

In the previous section we defined a Cayley graph on a group $\Ga$ to be a $\Ga$-vertex-transitive graph $G$ such that $\Ga$ acts regularly on $V(G)$. It is well known (see, for example, \cite[Lemma 3.7.2]{GR2004}) that this definition is equivalent to the following one: Let $\Gamma$ be a group with identity element $1$, and let $\Xi$ be an inverse-closed multiset with elements from $\Gamma\setminus\{1\}$ such that for every $\xi \in \Xi$ the multiplicities of $\xi$ and $\xi^{-1}$ in $\Xi$ are equal. Call $\Xi$ a \emph{Cayley multiset} of $\Gamma$ and define $\Cay(\Gamma,\Xi)$ to be the graph with vertex set $\Gamma$ such that for any $\alpha, \beta \in \Ga$ the number of edges joining $\alpha$ and $\beta$ is equal to the multiplicity of $\alpha^{-1}\beta$ in $\Xi$. Note that this graph contains no loops as $1 \notin \Xi$, and it is a simple graph if and only if the multiplicity of each element of $\Xi$ is $1$ (that is, $\Xi$ is a subset of $\Gamma\setminus\{1\}$). It is straightforward to verify that the left regular representation of $\Gamma$ is a subgroup of $\Aut(\Cay(\Gamma,\Xi))$ and therefore $\Cay(\Gamma,\Xi)$ is a Cayley graph on $\Gamma$. Conversely, if $G$ is a Cayley graph on a group $\Gamma$ with no loops, then $\Gamma$ is regular on $V(G)$ and $G$ is isomorphic to $\Cay(\Gamma,\Xi)$, where, for an arbitrarily chosen but fixed vertex $u$ of $G$, the multiplicity in $\Xi$ of every element $\xi \in \Gamma$ is equal to the number of edges of $G$ between $u$ and $\xi(u)$.

We will use the following known result in our proof of Theorem \ref{nil}.

\begin{lem}
{\rm\cite[Theorem 4.3]{NS2009}}
\label{nilCay}
Every Cayley graph on a nilpotent group of valency at least four admits a nowhere-zero $3$-flow.
\end{lem}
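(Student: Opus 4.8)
The plan is to argue by induction on $|\Gamma|$, after first reducing to connected graphs and disposing of the even-valency case, so that the whole difficulty is concentrated in Cayley graphs of odd valency at least five.

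\emph{Reduction and the even case.} If $\Cay(\Gamma,\Xi)$ is disconnected, then $H:=\langle\Xi\rangle$ is a proper subgroup of $\Gamma$, hence again nilpotent and of smaller order, and every connected component is isomorphic to $\Cay(H,\Xi)$ of the same valency; so we may assume $\langle\Xi\rangle=\Gamma$. Decompose $\Xi$ into inverse-pairs $\{\xi,\xi^{-1}\}$ of non-involutions and into involutions. Since non-involutions contribute an even count, the valency $|\Xi|$ is even if and only if the number of involutions in $\Xi$ (with multiplicity) is even. In that case, pairing involutions two at a time makes the union of two involution-matchings a disjoint union of cycles, while each inverse-pair already induces a $2$-factor; orienting every such $2$-factor as directed cycles gives an orientation with $\varphi^{+}(v)=\varphi^{-}(v)=|\Xi|/2$ at every vertex. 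Equivalently, the connected even graph is Eulerian and admits a nowhere-zero $2$-flow, hence a nowhere-zero $3$-flow. Thus we may assume $|\Xi|$ is odd and at least five, which forces $|\Gamma|$ to be even.

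\emph{The inductive step.} As $\Gamma$ is nilpotent with $|\Gamma|$ even, its Sylow $2$-subgroup $P$ is a nontrivial direct factor, so $Z(P)\le Z(\Gamma)$ and $\Gamma$ has a central involution $z$; put $N:=\langle z\rangle\cong\Z_2$. The quotient $\bar G:=\Cay(\Gamma/N,\bar\Xi)$, with $\bar\Xi$ the image of $\Xi\setminus N$, is a Cayley graph on the smaller nilpotent group $\Gamma/N$. The aim is to build a nowhere-zero $3$-flow of $G$ from the inductively obtained flow on $\bar G$ together with the edges lying over the fibres of $G\to\bar G$: when $z\notin\Xi$ this map is a regular $\Z_2$-cover, and when $z\in\Xi$ the fibre edges are the matchings attached to $z$. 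In either case the surplus edges are the degrees of freedom available to absorb the imbalances modulo $3$ produced by the odd valency.

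\emph{The main obstacle.} The crux is keeping the lifted flow nowhere-zero. The naive orientation that balances all inverse-pairs and all but one involution-matching leaves a single perfect matching whose edges each shift the degree imbalance by $\pm1$, never $\equiv 0\pmod 3$, so a global correction is unavoidable. The right inductive strengthening is $\Z_3$-connectivity (group connectivity): I would prove that $\bar G$ is $\Z_3$-connected, which yields a nowhere-zero $\Z_3$-flow on $\bar G$ realizing any prescribed boundary, and in particular one whose pullback, after conservation is restored upon splitting the fibres, forces a nonzero value on every fibre or matching edge. Propagating $\Z_3$-connectivity through the quotient, and verifying the base cases — small $2$-groups, and quotients whose valency drops below four or in which loops and parallel edges proliferate — is where the real work lies, since these degenerate low-valency quotients escape the inductive hypothesis and require direct constructions of suitable mod-$3$ orientations.
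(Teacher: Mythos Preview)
This lemma is not proved in the paper at all: it is quoted verbatim as \cite[Theorem 4.3]{NS2009} and used as a black box in the proof of the main theorem. So there is no ``paper's own proof'' to compare your attempt against; the relevant comparison is with the original argument of N\'an\'asiov\'a and \v Skoviera.

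That said, your proposal is not a proof but a plan with an explicitly acknowledged gap. You correctly isolate the hard case (connected, odd valency $\ge 5$, central involution $z$, quotient by $\langle z\rangle$), and you correctly identify that a naive lift of a nowhere-zero $3$-flow from the quotient need not stay nowhere-zero. Your proposed fix --- strengthen the induction hypothesis to $\Z_3$-connectivity of the quotient --- is a natural idea, but you do not establish it; you yourself say the base cases and the degenerate quotients (valency dropping below four, loops, parallel edges) are ``where the real work lies.'' Proving $\Z_3$-connectivity for Cayley graphs on nilpotent groups is a strictly stronger statement than the lemma, and it is not something you can invoke for free; without it your induction does not close.

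For orientation, the original argument in \cite{NS2009} does not go via group connectivity. Judging from what the present paper extracts from that source (Lemma~\ref{closedl}, and the machinery behind Lemmas~\ref{pm2f} and~\ref{2factor}), the method there is structural: one uses the central involution to exhibit explicit closed-ladder subgraphs and builds the flow directly from decompositions into ladders and $2$-factors, rather than lifting from a quotient with a prescribed $\Z_3$-boundary. If you want to reprove the lemma yourself, that structural route is both more concrete and avoids the unproved $\Z_3$-connectivity strengthening.
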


Let $n \ge 2$ be an integer. The \emph{circular ladder} with $n$ rungs is defined to be the Cayley graph $CL_n := \Cay(\Gamma,\{\alpha,\alpha^{-1},\beta\})$, where $\Gamma$ is the abelian group with representation $\langle\alpha,\beta\mid \alpha^n=\beta^2=1, \alpha\beta=\beta\alpha\rangle$. Obviously, $\Cay(\Gamma,\{\beta\})$ is a spanning subgraph of $CL_n$, and its edges are called the \emph{rungs} of $CL_n$. The \emph{M\"obius ladder} with $n$ rungs is defined as $M_n := \Cay(\langle \alpha \rangle,\{\alpha,\alpha^{-1},\alpha^{n}\})$, where $\langle \a \rangle$ is the cyclic group of order $2n$ generated by $\alpha$. The edges of $\Cay(\langle \a \rangle,\{\alpha^{n}\})$ are called the \emph{rungs} of $M_n$. A circular or M\"obius ladder is called a \emph{closed ladder}.

In our proof of Theorem \ref{nil}, we will also use the following lemma which is extracted from \cite[Proposition 3.1]{NS2009}. Recall from group theory that an \emph{involution} of a group is an element of order $2$ and a \emph{central involution} is an involution which is commutable with every element of the group.

\begin{lem}
\label{closedl}
Let $\Cay(\Gamma,\Xi)$ be a connected cubic Cayley graph such that $\Xi$ contains a central involution of $\Gamma$. Then $\Gamma$ is a closed ladder.
\end{lem}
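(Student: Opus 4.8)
The plan is to remove the perfect matching formed by the edges labelled by the central involution $z$, describe what is left, and then glue back. Since $\Cay(\Ga,\Xi)$ is connected and cubic, $\langle\Xi\rangle=\Ga$ and $\Xi$ has exactly three elements counted with multiplicity; in particular $|\Ga|$ is even, since $\Ga$ contains the involution $z$. First I would dispose of the cases where $z$ occurs in $\Xi$ with multiplicity at least $2$. If the multiplicity is $3$, then $\Ga=\langle z\rangle\cong\Z_2$ and $\Cay(\Ga,\Xi)$ is the triple edge, namely the M\"obius ladder $M_1$. If the multiplicity is $2$, then inverse-closedness forces $\Xi=\{z,z,w\}$ for some involution $w\ne z$, and since $z$ is central the group $\Ga=\langle z,w\rangle$ is abelian and generated by two distinct commuting involutions, so $\Ga\cong\Z_2^2$ and $\Cay(\Ga,\Xi)\cong CL_2$.

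From now on assume $z$ has multiplicity $1$, write $\Xi=\{z\}\cup S$ where $S$ is a multiset of size $2$ with $z\notin S$, and put $H=\langle S\rangle$. Centrality of $z$ gives $\langle S\cup\{z\}\rangle=H\langle z\rangle$, so connectivity forces $\Ga=H\langle z\rangle$ and hence $[\Ga:H]\in\{1,2\}$. The spanning subgraph $\Cay(\Ga,S)$ is $2$-regular, and its connected components are the subgraphs induced on the left cosets of $H$, each of which is a cycle of length $|H|$ (reducing to a pair of parallel edges when $S$ consists of a repeated involution). Thus $\Cay(\Ga,S)$ is a disjoint union of $[\Ga:H]$ such cycles, and $\Cay(\Ga,\Xi)$ is obtained from it by adding the perfect matching $M=\{\{g,gz\}:g\in\Ga\}$ of $z$-edges. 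Again because $z$ is central, right multiplication $\rho_z\colon g\mapsto gz$ is an automorphism both of $\Cay(\Ga,S)$ and of $\Cay(\Ga,\Xi)$; it is an involution, it fixes no vertex (as $z\ne 1$), it fixes no edge of $\Cay(\Ga,S)$ (as $z\notin S$), and $M=\{\{g,\rho_z(g)\}:g\in\Ga\}$.

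If $[\Ga:H]=2$, then $\Cay(\Ga,S)$ is the disjoint union of the cycles induced on $H$ and on $Hz$, and $\rho_z$ restricts to an isomorphism from the first onto the second; identifying the two cycles along this isomorphism turns $M$ into the trivial perfect matching between two copies of $C_{|H|}$, so $\Cay(\Ga,\Xi)\cong C_{|H|}\times K_2=CL_{|H|}$ (and $|H|=2$ gives $CL_2$). If instead $[\Ga:H]=1$, then $\Cay(\Ga,S)$ is a single cycle $C_N$ with $N=|\Ga|$, which is even, and $\rho_z$ is an involutory automorphism of $C_N$ fixing neither a vertex nor an edge. The automorphism group of $C_N$ is dihedral, and its only such element is the rotation by $N/2$; hence $\rho_z$ is the antipodal map, $M$ is the antipodal matching, and $\Cay(\Ga,\Xi)$, being $C_N$ with the edges of $M$ added, is by definition the M\"obius ladder $M_{N/2}$. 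In all cases $\Cay(\Ga,\Xi)$ is a closed ladder.

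The main obstacle, I expect, is making the two gluing steps fully rigorous: checking carefully that $\rho_z$ is fixed-point-free on the edges of $\Cay(\Ga,S)$, and that in the index-$2$ case the matching obtained is the untwisted one, so that a circular rather than a M\"obius ladder results; one must also fold the small cases ($|H|=2$ in the index-$2$ case, $N=4$ in the connected case) into the generic argument. The group-theoretic ingredients are light: the existence of the involution $z$ for the parity of $|\Ga|$, the fact that a group generated by two commuting involutions is $\Z_2$ or $\Z_2^2$, and the structure of the automorphism group of a cycle.
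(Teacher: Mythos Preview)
Your argument is correct. The paper does not supply its own proof of this lemma; it simply states that the result ``is extracted from \cite[Proposition 3.1]{NS2009}'' and moves on. What you have written is a complete, self-contained proof along the natural lines: strip off the perfect matching given by the central involution $z$, observe that the remaining $2$-factor $\Cay(\Gamma,S)$ consists of one or two cycles according to whether $z\in\langle S\rangle$ or not, and then use that right multiplication by $z$ is a fixed-point-free involutory automorphism to identify the glued graph as a M\"obius or circular ladder respectively. Your handling of the degenerate cases (multiplicity of $z$ at least $2$, and the small cases $|H|=2$, $N=4$) is adequate; in particular, $|H|=2$ with $[\Gamma:H]=1$ cannot occur since that would force $z\in S$, and the case $N=4$ falls under your generic dihedral argument and yields $M_2\cong K_4$. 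The only place one might tighten the exposition is the index-$2$ case: you assert directly that matching each $h\in H$ to $\rho_z(h)=hz$ gives the prism $C_{|H|}\times K_2$, and it is worth saying explicitly that this is because $\rho_z$ is a graph isomorphism between the two cycles (which you have already verified), so that after relabelling the second cycle via $\rho_z^{-1}$ the matching becomes the identity matching.
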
 

A \emph{$k$-factor} of a graph is a spanning regular subgraph of valency $k$. The edge set of an $1$-factor is called a \emph{perfect matching} of the graph. (By abusing terminology, we also use the term ``perfect matching" to mean the underlying $1$-factor.) An \emph{even graph} is a graph of which each vertex has even valency. A \emph{parity subgraph} of a graph is a spanning subgraph such that the valencies of every vertex in the subgraph and the original graph have the same parity. It is well known \cite[Theorem 21.4]{BM2008} that every even graph admits a nowhere-zero $3$-flow. It is also easy to see that every graph is the edge-disjoint union of any parity subgraph and an even spanning subgraph. Combining these, we see that a graph admits a nowhere-zero $3$-flow if one of its parity subgraphs does.

The following lemma relies on a recent result proved in \cite{ZZ2021}.

\begin{lem}
\label{pm2f}
Let $G$ be a graph of which every vertex is of odd valency. If $G$ has a perfect matching $M$ and two edge-disjoint $2$-factors $A$ and $B$ such that in $A\cup M$ and $B\cup M$ every connected component is a closed ladder whose rung set is contained in $M$, then $G$ admits a nowhere-zero $3$-flow.
\end{lem}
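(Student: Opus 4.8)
The plan is to construct a nowhere-zero $3$-flow on $G$ by first building a nowhere-zero $4$-flow on the $3$-regular spanning subgraph $A\cup M$ and likewise on $B\cup M$, and then recombining these using the assumption that the edge sets of $A$, $B$, $M$ are pairwise disjoint and $A\cup B\cup M$ has the same parity as $G$ at every vertex (so $A\cup B\cup M$ is a parity subgraph of $G$). Since a graph admits a nowhere-zero $3$-flow whenever one of its parity subgraphs does (as recalled in the paragraph preceding the lemma), it suffices to produce a nowhere-zero $3$-flow on $H:=A\cup B\cup M$. Here $H$ is $5$-regular, $M$ is a perfect matching of $H$, and $A$, $B$ are $2$-factors with $A\cup M$ and $B\cup M$ decomposing into closed ladders whose rungs lie in $M$.

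The key step is to exploit the recent result from \cite{ZZ2021} alluded to in the statement. Concretely, I would invoke the $\mathbb{Z}_3$-flow criterion that lets one ``glue'' flows across a shared perfect matching: orient the rung edges of $M$ consistently, and observe that in a closed ladder $L$ with rung set $R\subseteq M$, the graph $L-R$ is a disjoint union of cycles (one cycle for the circular ladder, one for the M\"obius ladder), so $L$ carries a natural nowhere-zero $\mathbb{Z}_3$-tension-like assignment: put value $1$ on every rung and route the ``defect'' around the cycle part, which is possible precisely because each component of $L-R$ is an even (in fact Hamiltonian-type) cycle and the rung endpoints alternate. The result from \cite{ZZ2021} should be exactly the statement that allows two such rung-compatible $\mathbb{Z}_3$-assignments --- one coming from $A\cup M$, the other from $B\cup M$, agreeing on $M$ up to a controllable sign --- to be added to yield a nowhere-zero $\mathbb{Z}_3$-flow on $A\cup B\cup M$; equivalently, it lets us realize $M$ as the ``difference'' $2$-factor pattern needed so that the cycle flows on $A\setminus(\text{stuff})$ and $B$ patch up. I would write the combination explicitly: choose flow value $1$ on every cycle edge of $A$-cycles and $B$-cycles after reorienting, and on each rung $e\in M$ assign the value forced by conservation at its two endpoints; the closed-ladder structure guarantees this forced value is nonzero in $\mathbb{Z}_3$ and that conservation holds globally.

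The main obstacle will be verifying the consistency of the two gluings along $M$ simultaneously: a single closed ladder in $A\cup M$ and a single closed ladder in $B\cup M$ need not share the same rungs, so the rungs of $M$ are partitioned in two different ways, and one must check that the $\mathbb{Z}_3$-value demanded on a rung $e$ by the $A$-ladder containing $e$ is compatible with the value demanded by the $B$-ladder containing $e$ --- i.e. that there is a global choice of orientation of $M$ making both conservation systems solvable. This is where the result of \cite{ZZ2021} does the real work, and I would reduce to it by phrasing the requirement as a single $\mathbb{Z}_3$-valued system on $H$ whose solvability follows from the closed-ladder hypothesis on \emph{both} $A\cup M$ and $B\cup M$. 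Once that compatibility is in hand, the passage from the $\mathbb{Z}_3$-flow on $H$ to a nowhere-zero $3$-flow on $G$ is immediate from the parity-subgraph remark, completing the proof.
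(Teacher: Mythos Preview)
Your approach is essentially the same as the paper's: reduce to the $5$-regular parity subgraph $H=A\cup B\cup M$ and then invoke \cite[Theorem~3.4]{ZZ2021} on the family $\mathcal{F}$ of closed-ladder components of $A\cup M$ and $B\cup M$ to obtain a nowhere-zero $3$-flow on $H$. The paper's write-up is simply more direct---it verifies the three hypotheses of that theorem (that $H=\bigcup_{L\in\mathcal{F}}L$, that $E(L)\cap M$ consists of rungs of $L$, and that each edge of $M$ is a rung of exactly two ladders in $\mathcal{F}$) and applies it, rather than speculating about the gluing mechanism inside the cited result.
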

\begin{proof}
Set $H=A\cup B\cup M$ and let $\mathcal{F}$ be the family of connected components of $A\cup M$ and $B\cup M$. By our assumption, $\mathcal{F}$ is a family of closed ladders satisfying the following conditions:
\begin{itemize}
  \item $H=\cup_{L\in\mathcal{F}}L$;
  \item for every $L\in \mathcal{F}$, each edge in $E(L) \cap M$ is a rung of $L$;
  \item each edge in $M$ is a rung of two distinct closed ladders in $\mathcal{F}$.
\end{itemize}
By \cite[Theorem 3.4]{ZZ2021}, $H$ admits a nowhere-zero $3$-flow. Our assumption on $A\cup M$ and $B\cup M$ implies that $M$ is edge-disjoint with both $A$ and $B$. Hence $H$ is a regular graph of valency $5$. Since every vertex of $G$ is of odd valency, it follows that $H$ is a parity subgraph of $G$. Therefore, $G$ admits a nowhere-zero $3$-flow.
\end{proof}

\section{Multicovers and regular covers}
\label{sec:cover}

\subsection{Multicovers}

Let $\sigma: G \rightarrow \tilde{G}$ be a surjective homomorphism. If there exists a positive integer $\ell$ such that $|\sigma^{-1}(\tilde{e})\cap E(v)|=\ell$ for any $\tilde{v}\in V(\tilde{G})$, $v\in\sigma^{-1}(\tilde{v})$ and $\tilde{e}\in E(\tilde{v})$, then we call $\sigma$ a \emph{multicovering projection} and $G$ a \emph{multicover} of $\tilde{G}$. In the special case when $\ell=1$, we call $\sigma$ a \emph{covering projection} and $G$ a \emph{cover} of $\tilde{G}$. Given an orientation $\tilde{D}$ of $\tilde{G}$, we define the \emph{lifting} of $\tilde{D}$ to be the orientation of $G$ in which an edge $e$ of $G$, say, with end-vertices $u$ and $v$, is oriented from $u$ to $v$ precisely when $\sigma(e)$ is oriented from $\sigma(u)$ to $\sigma(v)$ in $\tilde{D}$.

\begin{lem}
\label{lift}
Let $\sigma: G \rightarrow \tilde{G}$ be a multicovering projection and $k\geq2$ an integer. Let $(\tilde{D},\tilde{\varphi})$ be a $k$-flow of $\tilde{G}$ and $D$ the lifting of $\tilde{D}$. Let $\varphi$ be the integer-valued function on $E(G)$ defined by $\varphi(e)=\tilde{\varphi}(\sigma(e))$ for $e\in E(G)$. Then $(D,\varphi)$ is a $k$-flow of $G$.
\end{lem}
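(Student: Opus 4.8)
The plan is to verify directly that the lifted function $\varphi$ satisfies the two defining properties of a $k$-flow: the bounded-value condition $|\varphi(e)| < k$ for every $e \in E(G)$, and the conservation condition $\varphi^{+}(v) = \varphi^{-}(v)$ for every $v \in V(G)$. The first is immediate, since $\varphi(e) = \tilde{\varphi}(\sigma(e))$ and $(\tilde{D}, \tilde{\varphi})$ is a $k$-flow of $\tilde{G}$, so $|\varphi(e)| = |\tilde{\varphi}(\sigma(e))| < k$. So the real content is the conservation law at each vertex of $G$.

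For the conservation condition, fix $v \in V(G)$ and let $\tilde{v} = \sigma(v)$. The key structural observation is that, because $D$ is the lifting of $\tilde{D}$, an edge $e \in E(v)$ lies in $D^{+}(v)$ if and only if $\sigma(e) \in \tilde{D}^{+}(\tilde{v})$, and similarly $e \in D^{-}(v)$ if and only if $\sigma(e) \in \tilde{D}^{-}(\tilde{v})$; this is exactly how the orientation of $G$ was defined. Consequently $\sigma$ restricts to a map from $D^{+}(v)$ to $\tilde{D}^{+}(\tilde{v})$ and from $D^{-}(v)$ to $\tilde{D}^{-}(\tilde{v})$. Now I would invoke the multicover hypothesis: for each $\tilde{e} \in E(\tilde{v})$ we have $|\sigma^{-1}(\tilde{e}) \cap E(v)| = \ell$, where $\ell$ is the fixed multiplicity from the definition of multicovering projection. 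Hence each $\tilde{e} \in \tilde{D}^{+}(\tilde{v})$ has exactly $\ell$ preimages in $D^{+}(v)$, and likewise for the negative sets.

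Putting this together, I would compute
\begin{equation*}
  \varphi^{+}(v) = \sum_{e \in D^{+}(v)} \varphi(e) = \sum_{e \in D^{+}(v)} \tilde{\varphi}(\sigma(e)) = \ell \sum_{\tilde{e} \in \tilde{D}^{+}(\tilde{v})} \tilde{\varphi}(\tilde{e}) = \ell\, \tilde{\varphi}^{+}(\tilde{v}),
\end{equation*}
where the third equality groups the $\ell$ preimages of each $\tilde{e}$. By the same reasoning, $\varphi^{-}(v) = \ell\, \tilde{\varphi}^{-}(\tilde{v})$. Since $(\tilde{D}, \tilde{\varphi})$ is a $k$-flow of $\tilde{G}$, we have $\tilde{\varphi}^{+}(\tilde{v}) = \tilde{\varphi}^{-}(\tilde{v})$, so $\varphi^{+}(v) = \ell\, \tilde{\varphi}^{+}(\tilde{v}) = \ell\, \tilde{\varphi}^{-}(\tilde{v}) = \varphi^{-}(v)$. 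As $v$ was arbitrary, the conservation law holds everywhere, and hence $(D, \varphi)$ is a $k$-flow of $G$.

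I do not anticipate any serious obstacle here; the proof is essentially bookkeeping. The one point that deserves care is making the grouping argument rigorous — i.e., justifying that $D^{+}(v) = \bigsqcup_{\tilde{e} \in \tilde{D}^{+}(\tilde{v})} (\sigma^{-1}(\tilde{e}) \cap E(v))$ is a disjoint union into blocks of size $\ell$. This follows because $\sigma|_{E(G)}$ is a function (so the blocks are disjoint), because $\sigma$ maps $D^{+}(v)$ into $\tilde{D}^{+}(\tilde{v})$ and $D^{-}(v)$ into $\tilde{D}^{-}(\tilde{v})$ (so the blocks cover $D^{+}(v)$), and because the multicover condition pins each block size to $\ell$. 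Once that is spelled out, the displayed computation finishes the argument.
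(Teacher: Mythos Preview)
Your proposal is correct and follows essentially the same approach as the paper: both decompose $D^{+}(v)$ (and $D^{-}(v)$) into the fibres $\sigma^{-1}(\tilde{e})\cap E(v)$ over $\tilde{e}\in\tilde{D}^{+}(\tilde{v})$, use the multicover hypothesis to see each fibre has size $\ell$, and conclude $\varphi^{\pm}(v)=\ell\,\tilde{\varphi}^{\pm}(\tilde{v})$. Your version is slightly more explicit in verifying the bound $|\varphi(e)|<k$ and in justifying the disjointness of the decomposition, but the argument is the same.
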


\begin{proof}
Write $\tilde{v}=\sigma(v)$ for any $v\in V(G)$. Then
\begin{equation*}
D^{+}(v)=\bigcup_{\tilde{e}\in \tilde{D}^{+}(\tilde{v})}\sigma^{-1}(\tilde{e})\cap E(v)~~\mbox{and}~~ D^{-}(v)=\bigcup_{\tilde{e}\in \tilde{D}^{-}(\tilde{v})}\sigma^{-1}(\tilde{e})\cap E(v).
\end{equation*}
Since $\sigma$ is a multicovering projection from $G$ to $\tilde{G}$, there exists a positive integer $\ell$ such that for any $v\in V(G)$ and $\tilde{e}\in E(\tilde{v})$, we have $|\sigma^{-1}(\tilde{e})\cap E(v)|=\ell$. It follows that
\begin{eqnarray*}
\varphi^{+}(v) & = & \sum\limits_{e\in D^{+}(v)}\varphi(e) \\
& = & \sum\limits_{\tilde{e}\in \tilde{D}^{+}(\tilde{v})}\left(\sum\limits_{e\in\sigma^{-1}(\tilde{e})\cap E(v)}\varphi(e)\right) \\
& = & \ell\sum\limits_{\tilde{e}\in \tilde{D}^{+}(\tilde{v})} \tilde{\varphi}(\tilde{e}) \\
& = & \ell\tilde{\varphi}^{+}(\tilde{v}).
\end{eqnarray*}
Similarly, $\varphi^{-}(v)=\ell\tilde{\varphi}^{-}(\tilde{v})$.
Since $(\tilde{D},\tilde{\varphi})$ is a $k$-flow of $\tilde{G}$, we have $\tilde{\varphi}^{+}(\tilde{v})=\tilde{\varphi}^{-}(\tilde{v})$. Therefore, $\varphi^{+}(v)=\varphi^{-}(v)$ for any $v\in V(G)$ and consequently $(D,\varphi)$ is a $k$-flow of $G$.
\end{proof}

We call the $k$-flow $(D,\varphi)$ in Lemma \ref{lift} the \emph{lifting} of $(\tilde{D},\tilde{\varphi})$. Clearly,
if $(\tilde{D},\tilde{\varphi})$ is nowhere-zero, then so is $(D,\varphi)$. Therefore, Lemma \ref{lift} implies the following corollary.

\begin{coro}{\rm\cite[Lemma 2.1]{LZ2016}}
\label{quotient}
Let $G$ be a multicover of $\tilde{G}$ and $k\geq2$ an integer. Then $G$ admits a nowhere-zero $k$-flow whenever $\tilde{G}$ does.
\end{coro}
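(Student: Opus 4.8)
The plan is to derive this statement directly from Lemma \ref{lift}, specialized to the nowhere-zero setting. Since $G$ is a multicover of $\tilde{G}$, there is by definition a multicovering projection $\sigma: G \rightarrow \tilde{G}$ at our disposal, and this is the only structure the argument requires.

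First I would unpack the hypothesis that $\tilde{G}$ admits a nowhere-zero $k$-flow: by definition this means there exist an orientation $\tilde{D}$ of $\tilde{G}$ and an integer-valued function $\tilde{\varphi}$ on $E(\tilde{G})$ such that $(\tilde{D},\tilde{\varphi})$ is a nowhere-zero $k$-flow. I would then form the lifting $D$ of $\tilde{D}$ and set $\varphi(e)=\tilde{\varphi}(\sigma(e))$ for each $e\in E(G)$, exactly as in the hypotheses of Lemma \ref{lift}. Applying that lemma immediately yields that $(D,\varphi)$ is a $k$-flow of $G$, so the only remaining point is to confirm that it is nowhere-zero.

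The nowhere-zero property is the easy final step, and it is precisely the observation recorded just after Lemma \ref{lift}. For any $e\in E(G)$ we have $\varphi(e)=\tilde{\varphi}(\sigma(e))$, and since $\tilde{\varphi}(\tilde{e})\neq 0$ for every $\tilde{e}\in E(\tilde{G})$ by our choice of a nowhere-zero flow, it follows that $\varphi(e)\neq 0$. Hence $(D,\varphi)$ is a nowhere-zero $k$-flow of $G$, so $G$ admits a nowhere-zero $k$-flow, as required.

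Because the construction is simply a pullback of $\tilde{\varphi}$ along $\sigma$, there is essentially no obstacle to overcome. All the genuine work—verifying the flow-conservation condition $\varphi^{+}(v)=\varphi^{-}(v)$ by exploiting the uniform multiplicity $\ell$ of the multicovering projection—has already been carried out in Lemma \ref{lift}, and the corollary is merely the restriction of that lemma to functions that never vanish.
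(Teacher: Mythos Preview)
Your proposal is correct and follows exactly the same approach as the paper: the paper simply notes that the lifting $(D,\varphi)$ of a nowhere-zero $k$-flow $(\tilde{D},\tilde{\varphi})$ produced by Lemma \ref{lift} is itself nowhere-zero, since $\varphi(e)=\tilde{\varphi}(\sigma(e))\neq 0$, and deduces the corollary immediately.
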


\subsection{Regular covers}

Let $G$ be a graph. Let $\Lambda$ be a subgroup of $\Aut(G)$ which acts semiregularly on both $V(G)$ and $E(G)$. (Note that, in general, a subgroup $\Lambda$ of $\Aut(G)$ can be semiregular on $V(G)$ but not on $E(G)$ as an edge can be fixed by an element of $\Lambda$ through swapping its end-vertices. Also, $\Lambda$ can be semiregular on $E(G)$ but not on $V(G)$.) For any $v\in V(G)$ and $e\in E(G)$, let
$$
\Lambda(v):=\{\alpha(v)\mid \alpha\in \Lambda\}~~\mbox{and}~~\Lambda(e):=\{\alpha(e)\mid \alpha\in \Lambda\}
$$
be the $\Lambda$-orbits on $V(G)$ and $E(G)$ containing $v$ and $e$, respectively. Of course, $\{\Lambda(v)\mid v\in V(G)\}$ and $\{\Lambda(e)\mid e\in E(G)\}$ are partitions of $V(G)$ and $E(G)$, respectively. It is readily seen that, for any $e\in E(G)$ with end-vertices $x$ and $y$, the set of end-vertices of the edges in $\Lambda(e)$ is exactly $\Lambda(x) \cup \Lambda(y)$. If we treat each $\Lambda(v)$ as a vertex and each $\Lambda(e)$ as an edge between $\Lambda(x)$ and $\Lambda(y)$, then we obtain a graph $G_\Lambda$ which is the quotient graph of $G$ with respect to the partition $\{\Lambda(v)\mid v\in V(G)\}$ of $V(G)$. Equivalently, we can view $G_\Lambda$ as the graph with vertex set $\{\Lambda(v)\mid v\in V(G)\}$ and edge set $\{\Lambda(e)\mid e\in E(G)\}$ such that $\Lambda(v)$ is incident with $\Lambda(e)$ if and only if $\Lambda(v)$ contains a vertex which is incident with an edge in $\Lambda(e)$. It is obvious that $G_\Lambda$ contains no loops if $\Lambda(v)$ is an independent set of $G$ for every $v\in V(G)$. It can be verified that the mapping
\begin{equation*}
\sigma:V(G)\cup E(G)\rightarrow V(G_\Lambda)\cup E(G_\Lambda)
\end{equation*}
defined by
$$
\sigma(v) = \Lambda(v),\; \sigma(e) = \Lambda(e),\; v\in V(G),\; e\in E(G)
$$
is a covering projection from $G$ to $G_\Lambda$. Hence $G$ is a cover of $G_\Lambda$. Using the language of topological graph theory \cite{GT1987}, $G$ is called a \emph{regular cover} of $G_\Lambda$, and $G_\Lambda$ is called a \emph{regular quotient} of $G$ induced by $\Lambda$.  

Now suppose that $\Gamma$ is a subgroup of $\Aut(G)$ which has a normal subgroup $\Lambda$ acting semiregularly on both $V(G)$ and $E(G)$. Let $\Lambda^{*}$ consist of those elements of $\Gamma$ which leave both $\Lambda(v)$ and $\Lambda(e)$ invariant for every $v\in V(G)$ and $e\in E(G)$. Then $\Lambda^{*}$ is a subgroup of $\Gamma$ containing $\Lambda$. Let $\alpha\in\Lambda^{*}$ and $\gamma\in \Gamma$.  Since
\begin{equation*}
\gamma^{-1}\alpha\gamma(\Lambda(v))
=\gamma^{-1}\alpha\gamma\Lambda\gamma^{-1}(\gamma(v))
=\gamma^{-1}\alpha\Lambda(\gamma(v))=\gamma^{-1}\Lambda(\gamma(v))
=\gamma^{-1}\Lambda\gamma(v)=\Lambda(v)
\end{equation*}
for all $v\in V(G)$ and similarly $\gamma^{-1}\alpha\gamma(\Lambda(e))=\Lambda(e)$
for all $e\in V(G)$,
we have $\gamma^{-1}\alpha\gamma\in\Lambda^{*}$ and thus $\Lambda^{*}$ is normal in $\Gamma$. Therefore, for every $\gamma\Lambda^{*}\in\Gamma/\Lambda^{*}$, the rule
\begin{equation*}
\gamma\Lambda^{*}(\Lambda(v))=\Lambda(\gamma(v))~\mbox{and}~
\gamma\Lambda^{*}(\Lambda(e))=\Lambda(\gamma(e))
\end{equation*}
for $\Lambda(v)\in V(G_{\Lambda})$ and $\Lambda(e)\in E(G_{\Lambda})$ defines a permutation on $V(G_{\Lambda})\cup E(G_{\Lambda})$. One can see that this permutation is not the identity permutation if $\gamma\notin\Lambda^{*}$. This means that $\Gamma/\Lambda^{*}$ acts faithfully on $V(G_{\Lambda})\cup E(G_{\Lambda})$. Since $\Gamma$ preserves the adjacency relation of $G$, $\Gamma/\Lambda^{*}$ preserves the adjacency relation of $G_{\Lambda}$. Thus, $\Gamma/\Lambda^{*}$ can be viewed as a subgroup of $\Aut(G_{\Lambda})$. Moreover, if $\Gamma$ is transitive on $V(G)$, then $\Gamma/\Lambda^{*}$ is transitive on $V(G_{\Lambda})$. Therefore, we have proved the following lemma, where the assumption that $\Lambda(v)$ is an independent set of $G$ ensures that $G_{\Lambda}$ has no loops.

\begin{lem}
\label{vt}
Let $G$ be a $\Gamma$-vertex-transitive graph, where $\Ga$ is a subgroup of $\Aut(G)$. Let $\Lambda$ be a normal subgroup of $\Gamma$ which acts semiregularly on both $V(G)$ and $E(G)$, and let $\Lambda^{*}$ be the normal subgroup of $\Gamma$ consisting of the elements of $\Gamma$ which leave both $\Lambda(v)$ and $\Lambda(e)$ invariant for every $v\in V(G)$ and $e\in E(G)$. If $\Lambda(v)$ is an independent set of $G$ for some $v\in V(G)$, then the regular quotient $G_{\Lambda}$ of $G$ induced by $\Lambda$ is $\Gamma/\Lambda^{*}$-vertex-transitive.
\end{lem}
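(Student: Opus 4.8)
The plan is to check directly that the natural induced action of $\Gamma/\Lambda^{*}$ on $V(G_{\Lambda})\cup E(G_{\Lambda})$ is well defined, faithful, incidence-preserving, and transitive on $V(G_{\Lambda})$, so that $\Gamma/\Lambda^{*}$ embeds into $\Aut(G_{\Lambda})$ and is vertex-transitive on $G_{\Lambda}$; almost all of the required verifications have already been carried out in the discussion immediately preceding the lemma, and the argument below is essentially an organisation of that discussion.

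First I would record that $G_{\Lambda}$ is genuinely loopless, which is exactly where the independence hypothesis is used. Since $\Gamma$ is transitive on $V(G)$ and $\Lambda\trianglelefteq\Gamma$, any $w\in V(G)$ can be written as $w=\gamma(v)$ with $\gamma\in\Gamma$, and then $\Lambda(w)=\Lambda(\gamma(v))=\gamma\Lambda\gamma^{-1}(\gamma(v))=\gamma(\Lambda(v))$; as $\gamma\in\Aut(G)$ and $\Lambda(v)$ is independent, $\Lambda(w)$ is independent as well. Hence every $\Lambda$-orbit on $V(G)$ is an independent set, which is precisely the condition under which the construction of Subsection~3.2 yields a loopless quotient $G_{\Lambda}$ together with the covering projection $\sigma\colon G\to G_{\Lambda}$.

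Next I would establish the three facts about $\Lambda^{*}$ and the quotient action. (i) $\Lambda\le\Lambda^{*}$ and $\Lambda^{*}\trianglelefteq\Gamma$: containment is clear from the definition, and normality follows from the conjugation identity $\gamma^{-1}\alpha\gamma(\Lambda(v))=\Lambda(v)$ (and the analogue on edges), valid for $\alpha\in\Lambda^{*}$ and $\gamma\in\Gamma$, which was computed in the excerpt. (ii) The rule $\gamma\Lambda^{*}\colon\Lambda(v)\mapsto\Lambda(\gamma(v))$, $\Lambda(e)\mapsto\Lambda(\gamma(e))$ is well defined: using $\Lambda\trianglelefteq\Gamma$ one has $\Lambda(\gamma(v))=\gamma(\Lambda(v))$, so if $\gamma\Lambda^{*}=\delta\Lambda^{*}$, say $\gamma=\delta\alpha$ with $\alpha\in\Lambda^{*}$, then $\gamma(\Lambda(v))=\delta(\alpha(\Lambda(v)))=\delta(\Lambda(v))=\Lambda(\delta(v))$, and likewise for edges; moreover this rule respects incidence because $\Gamma$ acts on $G$ by automorphisms, so it gives a well-defined action of $\Gamma/\Lambda^{*}$ by automorphisms of $G_{\Lambda}$. (iii) The action is faithful: if $\gamma\Lambda^{*}$ fixes every vertex and edge of $G_{\Lambda}$, then $\gamma(\Lambda(v))=\Lambda(v)$ and $\gamma(\Lambda(e))=\Lambda(e)$ for all $v$ and $e$, i.e.\ $\gamma\in\Lambda^{*}$. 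Together, (i)--(iii) exhibit an embedding $\Gamma/\Lambda^{*}\hookrightarrow\Aut(G_{\Lambda})$.

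Finally, transitivity of $\Gamma/\Lambda^{*}$ on $V(G_{\Lambda})$ is immediate: given vertices $\Lambda(v),\Lambda(w)$ of $G_{\Lambda}$, pick $\gamma\in\Gamma$ with $\gamma(v)=w$ using transitivity of $\Gamma$ on $V(G)$; then $\gamma\Lambda^{*}$ sends $\Lambda(v)$ to $\Lambda(\gamma(v))=\Lambda(w)$. I do not expect a genuine obstacle here, since everything reduces to routine orbit bookkeeping; the one point that needs care is the well-definedness in step (ii), where the normality of $\Lambda$ in $\Gamma$ is used to pass between $\Lambda(\gamma(v))$ and $\gamma(\Lambda(v))$ and to absorb the coset representative — this is exactly why $\Lambda$ is assumed normal rather than merely semiregular, and also the reason the definition of $\Lambda^{*}$ is phrased in terms of preserving \emph{all} $\Lambda$-orbits on vertices and edges.
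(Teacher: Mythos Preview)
Your proposal is correct and follows essentially the same approach as the paper: the lemma is stated as a summary of the discussion in Subsection~3.2, and you have faithfully organised that discussion into the verifications of looplessness, normality of $\Lambda^{*}$, well-definedness and faithfulness of the induced $\Gamma/\Lambda^{*}$-action, and transitivity on $V(G_{\Lambda})$. The only minor addition is that you spell out explicitly why one independent $\Lambda$-orbit forces all of them to be independent, which the paper leaves implicit.
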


\section{Technical Lemmas}
\label{sec:tech}

\begin{lem}
\label{free}
Let $G$ be a $\Gamma$-vertex-transitive graph, where $\Ga$ is a subgroup of $\Aut(G)$ acting faithfully on $V(G)$. Suppose that $\Gamma$ has a subgroup $\Lambda$ of which every cyclic subgroup is normal in $\Gamma$. Then $\Lambda$ acts semiregularly on $V(G)$.
\end{lem}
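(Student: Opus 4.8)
The plan is to argue by contradiction, exploiting the fact that the fixed-point set of a normal (cyclic) subgroup is invariant under the whole group, together with transitivity and faithfulness. Suppose $\Lambda$ does not act semiregularly on $V(G)$. Then there exist a nonidentity element $\alpha\in\Lambda$ and a vertex $v\in V(G)$ with $\alpha(v)=v$, so $\Fix(\alpha):=\{w\in V(G): \alpha(w)=w\}$ is nonempty. The first step is to observe that, since every cyclic subgroup of $\Lambda$ is normal in $\Gamma$, the cyclic group $\langle\alpha\rangle$ is normal in $\Gamma$; hence for every $\gamma\in\Gamma$ we have $\gamma\langle\alpha\rangle\gamma^{-1}=\langle\alpha\rangle$, and since $\gamma\langle\alpha\rangle\gamma^{-1}=\langle\gamma\alpha\gamma^{-1}\rangle$ this gives $\langle\gamma\alpha\gamma^{-1}\rangle=\langle\alpha\rangle$.

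The second step is the elementary identity, valid for any permutation and any $\gamma\in\Gamma$, that $\gamma\big(\Fix(\alpha)\big)=\Fix(\gamma\alpha\gamma^{-1})$: indeed $w\in\Fix(\alpha)$ iff $\alpha(w)=w$ iff $(\gamma\alpha\gamma^{-1})(\gamma(w))=\gamma(w)$ iff $\gamma(w)\in\Fix(\gamma\alpha\gamma^{-1})$. Moreover, for any element $\beta\in\Gamma$ one has $\Fix(\beta)=\Fix(\langle\beta\rangle)$, because a point fixed by $\beta$ is fixed by every power of $\beta$. Combining these with the first step, for every $\gamma\in\Gamma$ we get
\begin{equation*}
\gamma\big(\Fix(\alpha)\big)=\Fix(\gamma\alpha\gamma^{-1})=\Fix\big(\langle\gamma\alpha\gamma^{-1}\rangle\big)=\Fix\big(\langle\alpha\rangle\big)=\Fix(\alpha).
\end{equation*}
Thus $\Fix(\alpha)$ is a union of $\Gamma$-orbits on $V(G)$.

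The third step closes the argument: since $G$ is $\Gamma$-vertex-transitive, $V(G)$ is a single $\Gamma$-orbit, so the nonempty $\Gamma$-invariant set $\Fix(\alpha)$ must equal $V(G)$; that is, $\alpha$ fixes every vertex of $G$. But $\Gamma$ acts faithfully on $V(G)$, so $\alpha$ is the identity, contradicting $\alpha\neq1$. Hence no nonidentity element of $\Lambda$ fixes a vertex, i.e.\ $\Lambda$ acts semiregularly on $V(G)$.

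I do not anticipate a serious obstacle here: the only slightly delicate point is the bookkeeping in the second step — checking that the generator's fixed-point set coincides with the cyclic group's fixed-point set, and that conjugation by $\gamma$ sends $\Fix(\alpha)$ to $\Fix(\gamma\alpha\gamma^{-1})$ — after which normality of $\langle\alpha\rangle$ and transitivity force $\Fix(\alpha)\in\{\varnothing, V(G)\}$, and faithfulness rules out the second possibility for $\alpha\neq1$. (Note that faithfulness is essential and is exactly the hypothesis that $\Gamma$ acts faithfully on $V(G)$.)
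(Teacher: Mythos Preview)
Your proposal is correct and is essentially the same argument as the paper's, just packaged in the language of fixed-point sets: the paper picks a vertex $v=\alpha(u)$ not fixed by $\lambda$ and uses $\alpha^{-1}\lambda\alpha=\lambda^{r}$ to compute $\lambda(v)=\alpha\lambda^{r}(u)=\alpha(u)=v$, which is exactly your observation that $\Fix(\alpha)$ is $\Gamma$-invariant (via $\Fix(\gamma\alpha\gamma^{-1})=\Fix(\langle\alpha\rangle)=\Fix(\alpha)$) unwound at a single point. The logical structure---contradiction via normality of $\langle\alpha\rangle$, transitivity, and faithfulness---is identical.
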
 

\begin{proof}
Suppose to the contrary that $\Lambda$ is not semiregular on $V(G)$. Then there exists a nonidentity element $\lambda\in\Lambda$ such that $\lambda(u)=u$ for some $u\in V(G)$. Since $\Ga$ acts faithfully on $V(G)$ and $\lambda$ is not the identity automorphism of $G$, $\lambda(v)\neq v$ for some $v\in V(G)$. Since $G$ is $\Gamma$-vertex-transitive, $v=\alpha(u)$ for some $\alpha\in\Gamma$. Since every cyclic subgroup of $\Lambda$ is normal in $\Gamma$, we have $\alpha^{-1}\lambda\alpha=\lambda^{r}$ for some positive integer $r$. Thus, $\lambda(v)=\lambda\alpha(u)=\alpha\lambda^{r}(u)=\alpha(u)=v$, but this is a contradiction. Therefore, $\Lambda$ acts semiregularly on $V(G)$.
\end{proof}

\begin{lem}
\label{efree}
Let $G$ be a $\Gamma$-vertex-transitive graph, where $\Ga$ is a subgroup of $\Aut(G)$. Let $\Lambda$ be a normal subgroup of $\Gamma$ acting semiregularly on $V(G)$. If at least one $\Lambda$-orbit on $V(G)$ is an independent set of $\Gamma$, then $\Lambda$ acts semiregularly on $E(G)$.
\end{lem}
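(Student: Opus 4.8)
The plan is to argue by contradiction, using two simple facts: a nonidentity semiregular automorphism can fix an edge only by transposing its two end-vertices, and normality of $\Lambda$ forces the independence hypothesis to propagate from one $\Lambda$-orbit to all of them.

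First I would suppose that $\Lambda$ is not semiregular on $E(G)$, so that there exist a nonidentity element $\lambda\in\Lambda$ and an edge $e$ of $G$ with $\lambda(e)=e$. Let $x$ and $y$ be the end-vertices of $e$; these are distinct since $G$, being vertex-transitive, has no loops. As $\lambda$ permutes the two-element set $\{x,y\}$ of end-vertices of $e$, either $\lambda$ fixes both $x$ and $y$, or $\lambda$ interchanges them. The first alternative is impossible: since $\Lambda$ acts semiregularly on $V(G)$ and $\lambda\neq 1$, $\lambda$ fixes no vertex of $G$. Hence $\lambda(x)=y$ and $\lambda(y)=x$; in particular $y\in\Lambda(x)$, so $\Lambda(x)=\Lambda(y)$.

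Next I would upgrade the hypothesis that \emph{some} $\Lambda$-orbit on $V(G)$ is independent to the statement that \emph{every} $\Lambda$-orbit on $V(G)$ is independent. Since $\Lambda\trianglelefteq\Gamma$, for each $\gamma\in\Gamma$ and each $v\in V(G)$ we have $\gamma(\Lambda(v))=\gamma\Lambda(v)=\Lambda\gamma(v)=\Lambda(\gamma(v))$, so $\gamma$ maps $\Lambda$-orbits onto $\Lambda$-orbits. Because $\Gamma$ is transitive on $V(G)$, every $\Lambda$-orbit is the image under some $\gamma\in\Gamma$ of the given independent $\Lambda$-orbit; as $\gamma\in\Aut(G)$ preserves non-adjacency, every $\Lambda$-orbit is independent. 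In particular $\Lambda(x)$ is an independent set of $G$. But $x$ and $y$ are adjacent, being joined by $e$, and both lie in $\Lambda(x)=\Lambda(y)$, a contradiction. Therefore $\Lambda$ acts semiregularly on $E(G)$.

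I do not anticipate any real difficulty in carrying this out; the only point that needs a little care is the propagation step, where it is the normality of $\Lambda$ in $\Gamma$ (not merely that $\Lambda$ is a subgroup) that makes the partition of $V(G)$ into $\Lambda$-orbits invariant under $\Gamma$, and hence lets the independence of a single orbit force the independence of all of them.
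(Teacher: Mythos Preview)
Your proof is correct and follows essentially the same approach as the paper: both use normality of $\Lambda$ together with vertex-transitivity of $\Gamma$ to propagate the independence of one $\Lambda$-orbit to all $\Lambda$-orbits, and then combine this with semiregularity on $V(G)$ to rule out the possibility that a nonidentity element of $\Lambda$ fixes an edge (by swapping its end-vertices). The only cosmetic difference is that the paper phrases the argument directly (starting from an arbitrary edge and showing $\lambda(e)\neq e$) whereas you phrase it as a contradiction.
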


\begin{proof}
By our assumption we may assume that $\Lambda(u)$ is an independent set of $\Gamma$, where $u \in V(G)$. Consider an arbitrary edge $e$ of $G$, say, with end-vertices $v$ and $w$. Since $G$ is $\Gamma$-vertex-transitive, there exists $\alpha\in\Gamma$ such that $v=\alpha(u)$. Since $\Lambda$ is normal in $\Gamma$, we have
\begin{equation*}
\Lambda(v)=\Lambda(\alpha(u))=\alpha(\alpha^{-1}\Lambda\alpha(u))= \alpha(\Lambda(u)).
\end{equation*}
Since $\alpha$ is an automorphism of $G$ and $\Lambda(u)$ is an independent set of $\Gamma$, it follows that $\Lambda(v)$ is also an independent set of $\Gamma$. Since $v$ and $w$ are adjacent in $G$ and $v \in \Lambda(v)$, we then obtain that $w \notin \Lambda(v)$. It follows that $\lambda(v)\neq w$ for any nonidentity element $\lambda$ of $\Lambda$. Since $\Lambda$ is semiregular on $V(G)$, we also have $\lambda(v) \neq v$. Therefore, $\lambda(e) \neq e$ and consequently $\Lambda$ is semiregular on $E(G)$.
\end{proof}

Since a graph $G$ in this paper may have loops and parallel edges, its automorphism group $\Aut(G)$ as defined in Section \ref{sec:int} may not be faithful on $V(G)$ (as $\Aut(G)$ may contain elements fixing every vertex which interchange two parallel edges and fix all other edges). In other words, the kernel of the action of $\Aut(G)$ on $V(G)$ is not necessarily the trivial subgroup of $\Aut(G)$.

\begin{lem}
\label{faithful}
Let $G$ be a loopless graph. Let $\Theta$ be the kernel of the action of $\Aut(G)$ on $V(G)$. Then $\Aut(G)$ has a subgroup $\Sigma$ acting faithfully on $V(G)$ such that $\Aut(G)=\Sigma\Theta$.
\end{lem}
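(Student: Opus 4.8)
The plan is to build $\Sigma$ by fixing, once and for all, a labelling of the parallel edges between each pair of adjacent vertices and letting $\Sigma$ consist of the automorphisms of $G$ that move these edge-sets around while preserving the labels. First I would record the structure of $\Theta$: since $G$ is loopless, a vertex-fixing automorphism sends each edge to an edge with the same pair of end-vertices, so for every pair $\{u,v\}$ of adjacent vertices it restricts to a permutation of the set $E(u,v)$ of edges joining $u$ and $v$, and conversely any choice of such permutations defines an element of $\Theta$. In particular $\Theta\trianglelefteq\Aut(G)$, and the natural homomorphism $\pi\colon\Aut(G)\to\Sym(V(G))$ sending each automorphism to its action on vertices has kernel $\Theta$; write $\Sigma_{0}:=\pi(\Aut(G))$ for its image.

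Next I would fix, for every pair $\{u,v\}$ of adjacent vertices of $G$, a bijection $\phi_{\{u,v\}}\colon E(u,v)\to\{1,\dots,|E(u,v)|\}$. For $\sigma\in\Sigma_{0}$, each adjacent pair $\{u,v\}$ is sent by $\sigma$ to an adjacent pair $\{\sigma(u),\sigma(v)\}$ with $|E(\sigma(u),\sigma(v))|=|E(u,v)|$ (both hold because $\sigma$ is induced by an automorphism of $G$), so we may define $\hat\sigma\colon V(G)\cup E(G)\to V(G)\cup E(G)$ by $\hat\sigma(v)=\sigma(v)$ on vertices and, for $e\in E(u,v)$, by letting $\hat\sigma(e)$ be the unique edge of $E(\sigma(u),\sigma(v))$ whose $\phi_{\{\sigma(u),\sigma(v)\}}$-label equals $\phi_{\{u,v\}}(e)$. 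A routine verification shows that $\hat\sigma\in\Aut(G)$ and that $\widehat{\sigma\tau}=\hat\sigma\,\hat\tau$ (check on vertices, then track the labels through an edge of $E(u,v)$), so $\sigma\mapsto\hat\sigma$ is a homomorphism from $\Sigma_{0}$ into $\Aut(G)$; let $\Sigma$ be its image.

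It then remains to check the two required properties. Since $\hat\sigma|_{V(G)}=\sigma$, the homomorphism $\sigma\mapsto\hat\sigma$ is injective and $\pi(\hat\sigma)=\sigma$; in particular $\Sigma\cap\Theta=\{1\}$, so $\Sigma$ acts faithfully on $V(G)$. For the factorization, take $\gamma\in\Aut(G)$ and put $\sigma=\pi(\gamma)\in\Sigma_{0}$; then $\hat\sigma\in\Sigma$ and $\pi(\hat\sigma^{-1}\gamma)=\sigma^{-1}\sigma=1$, so $\hat\sigma^{-1}\gamma\in\Theta$ and hence $\gamma=\hat\sigma(\hat\sigma^{-1}\gamma)\in\Sigma\Theta$; the reverse inclusion $\Sigma\Theta\subseteq\Aut(G)$ is immediate, whence $\Aut(G)=\Sigma\Theta$. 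I do not expect a serious obstacle here; the only point demanding care is that the label-preserving rule really yields a group homomorphism $\sigma\mapsto\hat\sigma$ — so that $\Sigma$ is genuinely a subgroup rather than merely a set of coset representatives — and this is exactly where the globally fixed choice of labellings $\phi_{\{u,v\}}$ is used (an automorphism of the underlying simple graph need not lift to $G$ unless it preserves edge multiplicities, but those induced by $\Aut(G)$ automatically do).
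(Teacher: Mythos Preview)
Your proposal is correct and follows essentially the same approach as the paper: both fix a labelling of the parallel edges in each multi-edge and take $\Sigma$ to consist of the automorphisms of $G$ that permute these edge-bundles while preserving the labels (the paper phrases this as lifting the multiplicity-preserving automorphisms of the underlying simple graph $H$ via $e^{(i)}\mapsto\gamma(e)^{(i)}$). The only minor difference is in the last step: the paper concludes $\Aut(G)=\Sigma\Theta$ by a cardinality comparison through a homomorphism $\psi:\Aut(G)\to\Gamma$ with kernel $\Theta$, whereas you observe directly that $\sigma\mapsto\hat\sigma$ is a section of $\pi$, which is slightly cleaner.
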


\begin{proof}
Let $H$ be a simple spanning subgraph of $G$ obtained from $G$ by deleting all but one parallel edges between every pair of adjacent vertices. Since $H$ is spanning, we have $V(H)=V(G)$. For each $e \in E(H)$, let $e=e^{(1)},\ldots,e^{(m(e))}$ be the edges of $G$ with the same end-vertices as $e$, where $m(e)$ is the multiplicity of $e$ in $G$. Then $E(G)=\{e^{(i)}\mid e\in E(H), i=1,\ldots,m(e)\}$. Let $\Gamma$ consist of all automorphisms $\gamma$ of $H$ having the property that, for each $e\in E(H)$, $\gamma(e)$ and $e$ have the same multiplicity in $G$. It is straightforward to verify that $\Gamma$ is a subgroup of $\Aut(H)$. Since $H$ is a simple graph, $\Aut(H)$ acts faithfully on $V(H)$, and hence $\Gamma$ acts faithfully on $V(H)$. For each $\gamma\in \Gamma$, define $\gamma^{*}$ to be the permutation on $V(G)\cup E(G)$ such that $\gamma^{*}(v)=\gamma(v)$ and $\gamma^{*}(e^{(i)})=\gamma(e)^{(i)}$ for all $v\in V(G)$ and $e^{(i)}\in E(G)$. Then $\gamma^{*}$ is an automorphism of $G$ since it maps the end-vertices of every $e^{(i)}\in E(G)$ onto the end-vertices of $\gamma^{*}(e^{(i)})$. Set $\Sigma=\{\gamma^{*}\mid \gamma\in \Gamma\}$. Then $|\Sigma|=|\Gamma|$ and $\Sigma$ is a subgroup of $\Aut(G)$ acting faithfully on $V(G)$. Since $\Theta$ fixes every vertex of $G$, we have $\Sigma\cap\Theta=\{1\}$. Since $\Theta$ is normal in $\Aut(G)$, $\Sigma\Theta$ is a subgroup of $\Aut(G)$.

Define $\tau$ to be a mapping from $V(G)\cup E(G)$ to $V(H)\cup E(H)$ such that $\tau(v)=v$ for all $v\in V(G)$ and $\tau(e^{(i)})=e^{(1)}$ for all $e^{(i)}\in E(G)$. It is obvious that $\tau$ is a surjective homomorphism from $G$ to $H$. For each $\alpha\in \Aut(G)$, define $\alpha'$ to be the permutation on $V(H)\cup E(H)$ such that $\alpha'(v)=\alpha(v)$ and $\alpha'(e^{(1)})=\tau(\alpha(e^{(1)}))$ for all $v\in V(H)$ and $e^{(1)}\in E(H)$. Then $\alpha'$ is an automorphism of $H$ such that, for any $e^{(1)}\in E(H)$, $\alpha'(e^{(1)})$ and $e^{(1)}$ have the same multiplicity in $G$. Hence $\alpha'\in \Gamma$. Since
\begin{equation*}
 (\alpha\beta)'(e^{(1)})=\tau(\alpha\beta(e^{(1)}))
 =\alpha'\tau(\beta(e^{(1)})) =\alpha'\beta'\tau(e^{(1)})
  =\alpha'\beta'(e^{(1)})
\end{equation*}
for all $\alpha, \beta\in \Aut(G)$ and $e^{(1)}\in E(G)$, the map $\psi:\Aut(G)\rightarrow \Gamma, \alpha\mapsto\alpha'$ is a group homomorphism. Since $\alpha'$ is the identity automorphism of $H$ if and only if $\alpha(v)=v$ for all $v\in V(G)$, the kernel of $\psi$ is exactly $\Theta$. By the First Isomorphism Theorem \cite[1.4.3]{R1995}, $\Aut(G)/\Theta$ is isomorphic to a subgroup of $\Gamma$. Thus, $|\Aut(G)/\Theta|\leq|\Gamma|$ and hence $|\Aut(G)|\leq|\Gamma||\Theta|$. Since $|\Sigma|=|\Gamma|$ and $\Sigma\cap\Theta=\{1\}$, we have $|\Sigma\Theta|=|\Sigma||\Theta|=|\Gamma||\Theta|$ and thus $|\Aut(G)| \leq |\Sigma\Theta|$. Since $\Sigma\Theta$ is a subgroup of $\Aut(G)$, it follows that $\Aut(G)=\Sigma\Theta$.
\end{proof}

The following lemma should be known in the literature (at least when the graph involved is simple). Since we are unable to identify a reference to it, we give its proof here for completeness.

\begin{lem}
\label{connect}
Let $G$ be a $\Gamma$-vertex-transitive graph, where $\Ga$ is a subgroup of $\Aut(G)$. Let $u$ be a vertex of $G$, and let $\{\delta_{1}(u),\ldots,\delta_{m}(u)\}$ be the neighborhood of $u$ in $G$, where $\delta_{1},\ldots,\delta_{m}\in\Gamma$. Then $G$ is connected if and only if $\Ga = \langle\Gamma_{u},\delta_{1},\ldots,\delta_{m}\rangle$, where $\Gamma_{u}$ is the stabilizer of $u$ under the action of $\Ga$.
\end{lem}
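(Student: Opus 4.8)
The plan is to prove both directions by a standard orbit-stabilizer / Schreier-type argument, taking care that $G$ may have parallel edges so that ``connected'' refers to the underlying vertex connectivity.

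First I would set up notation. Let $K := \langle \Gamma_u, \delta_1, \ldots, \delta_m\rangle \le \Gamma$, and let $C$ be the connected component of $G$ containing $u$; equivalently, $V(C)$ is the set of vertices reachable from $u$ by a walk in $G$. Since $G$ is $\Gamma$-vertex-transitive, $G$ is connected if and only if $V(C) = V(G)$, i.e. if and only if $C = G$. The key observation linking the two sides is that, because $\Gamma$ acts transitively on $V(G)$, we have a bijection between $V(G)$ and the coset space $\Gamma/\Gamma_u$ via $\gamma(u) \leftrightarrow \gamma\Gamma_u$; under this identification I want to show that $V(C)$ corresponds exactly to the set of cosets $K/\Gamma_u$ (note $\Gamma_u \le K$, so this makes sense), and hence $|V(C)| = |K : \Gamma_u| = |K|/|\Gamma_u|$ while $|V(G)| = |\Gamma : \Gamma_u| = |\Gamma|/|\Gamma_u|$. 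Then $C = G \iff |V(C)| = |V(G)| \iff |K| = |\Gamma| \iff K = \Gamma$, which is the claim.

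For the heart of the argument I would show $\{\gamma(u) : \gamma \in K\} = V(C)$ by two inclusions. For ``$\subseteq$'': it suffices to show that for every $\gamma \in K$, the vertex $\gamma(u)$ lies in $V(C)$; since $K$ is generated by $\Gamma_u$ and the $\delta_i$, and $\Gamma_u$ fixes $u$, I would induct on the word length of $\gamma$ in these generators, using at each step that if $w(u) \in V(C)$ then $w\delta_i^{\pm 1}(u) \in V(C)$ as well — this is because $\delta_i(u)$ is adjacent to $u$, so applying the automorphism $w$ (resp. $w\delta_i^{-1}$ together with edge-transport) keeps us inside the same component. Concretely, $u \sim \delta_i(u)$ implies $w(u) \sim w\delta_i(u)$ for any $w \in \Gamma$, which handles the $\delta_i$ step, and a symmetric remark handles $\delta_i^{-1}$; the $\Gamma_u$ generators do nothing new since they fix $u$, but one still needs them to be in $K$ for $K/\Gamma_u$ to be well-defined. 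For ``$\supseteq$'': every vertex of $C$ is joined to $u$ by a walk $u = v_0, v_1, \ldots, v_t$ with $v_{j-1}$ adjacent to $v_j$; I would show by induction on $t$ that each $v_j \in \{\gamma(u) : \gamma \in K\}$. Write $v_{j-1} = \kappa(u)$ with $\kappa \in K$ (induction hypothesis). Since $v_j$ is adjacent to $v_{j-1} = \kappa(u)$, the vertex $\kappa^{-1}(v_j)$ is adjacent to $u$, hence $\kappa^{-1}(v_j) = \delta_i(u)$ for some $i$ (here we use that $\{\delta_1(u),\ldots,\delta_m(u)\}$ is the full neighborhood of $u$, and that adjacency is unaffected by parallel edges). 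Therefore $v_j = \kappa\delta_i(u)$ with $\kappa\delta_i \in K$, completing the induction.

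The main obstacle — really the only subtlety — is bookkeeping around parallel edges and loops. Since the paper allows parallel edges, I must be careful that the ``neighborhood'' of $u$ is the set of vertices adjacent to $u$, counted without multiplicity, and that a walk's notion of adjacency ignores edge multiplicity; once this is pinned down, the argument above goes through verbatim, because $\Gamma \le \Aut(G)$ preserves the adjacency relation on $V(G)$ regardless of how it permutes parallel edges. (Loops are excluded by the standing assumption on vertex-transitive graphs, so $u$ is not adjacent to itself and no $\delta_i$ fixes $u$ — though even a $\delta_i \in \Gamma_u$ would cause no harm.) A secondary point to state cleanly is the bijection $V(G) \leftrightarrow \Gamma/\Gamma_u$ and its restriction to $V(C) \leftrightarrow K/\Gamma_u$: the map $\gamma\Gamma_u \mapsto \gamma(u)$ is well-defined and injective by definition of the stabilizer, surjective by transitivity, and it carries $K/\Gamma_u$ onto $\{\gamma(u):\gamma\in K\} = V(C)$ by what we just proved; a finiteness/counting comparison then finishes the proof.
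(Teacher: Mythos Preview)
Your proof is correct and follows essentially the same approach as the paper: both arguments identify the component of $u$ with the $K$-orbit of $u$ by converting walks in $G$ into words in $\Gamma_u,\delta_1,\ldots,\delta_m$ and vice versa, via the key step ``$v_j$ adjacent to $\kappa(u)$ implies $\kappa^{-1}(v_j)=\delta_i(u)$ for some $i$''. The only cosmetic difference is that you finish with a counting comparison $|K:\Gamma_u|=|\Gamma:\Gamma_u|$, whereas the paper argues each implication directly without the intermediate set $V(C)$; neither version gains anything substantive over the other.
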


\begin{proof}
Suppose that $G$ is connected. Then for any $\gamma \in \Gamma$ there exists a path $u=u_0,u_1,\ldots,u_{t-1},u_t=\gamma(u)$
in $G$ between $u$ and $\gamma(u)$. Since $G$ is $\Gamma$-vertex-transitive, for $0 \leq i\leq t$, there exists $\gamma_{i}\in\Gamma$ such that $u_i=\gamma_{i}(u)$, where we choose $\gamma_0 = 1$ and $\gamma_t = \gamma$. Since $\gamma_{i-1}(u)$ and $\gamma_{i}(u)$ are adjacent in $G$ and $\gamma_{i-1}$ is an automorphism of $G$, it follows that $u$ and $(\gamma_{i-1}^{-1}\gamma_{i})(u)$ are adjacent in $G$, for $1\leq i\leq t$. However, the neighborhood of $u$ in $G$ is $\{\delta_{1}(u),\ldots,\delta_{m}(u)\}$. So we have $\gamma_{i-1}^{-1}\gamma_{i} \in \cup_{j=1}^{m}\Gamma_{u}\delta_{j}\Gamma_{u}$, for $1\leq i\leq t$. Since $\gamma = \gamma_0(\gamma_{0}^{-1}\gamma_{1})(\gamma_{1}^{-1}\gamma_{2}) \cdots (\gamma_{t-1}^{-1}\gamma_{t})$, we then have $\gamma \in \langle\Gamma_{u},\delta_{1},\ldots,\delta_{m}\rangle$. Since this holds for every $\gamma \in \Gamma$, we conclude that $\Ga = \langle\Gamma_{u},\delta_{1},\ldots,\delta_{m}\rangle$.

Conversely, suppose that 
$\Ga=\langle\Gamma_{u},\delta_{1},\ldots,\delta_{m}\rangle$. Since $G$ is $\Gamma$-vertex-transitive, any vertex of $G$ is of the form $\gamma(u)$ for some $\gamma\in\Gamma$. Since $\Ga = \langle\Gamma_{u},\delta_{1},\ldots,\delta_{m}\rangle$, we can write $\gamma = \alpha_{1}\alpha_{2} \cdots \alpha_{\ell}$ for some $\alpha_{1}, \alpha_{2}, \ldots, \alpha_{\ell} \in \Gamma_{u} \cup \{\delta_{1},\ldots,\delta_{m}\}$. Note that $\alpha_{i}(u) = u$ if $\alpha_{i} \in \Gamma_{u}$, and $\alpha_{i}(u)$ is adjacent to $u$ if $\alpha_{i} \in \{\delta_{1},\ldots,\delta_{m}\}$. Therefore, in the sequence $u, \alpha_{1}(u), (\alpha_{1}\alpha_{2})(u), \ldots, (\alpha_{1} \alpha_{2} \cdots \alpha_{\ell})(u) = \gamma(u)$, any two consecutive terms are either identical or adjacent in $G$. So this sequence yields a walk from $u$ to $\gamma(u)$, and therefore $G$ contains a path from $u$ to $\gamma(u)$. Since $\gamma(u)$ is an arbitrary vertex of $G$, we conclude that $G$ is connected.
\end{proof}

The Frattini subgroup \cite{R1995} $\Phi(\Gamma)$ of a group $\Gamma$ is the intersection of all maximal subgroups of $\Gamma$. Equivalently, $\Phi(\Gamma)$ is the set of elements $\alpha$ of $\Gamma$ with the property that $\Gamma= \langle \alpha, \Xi \rangle$ always implies $\Gamma= \langle \Xi \rangle$ when $\Xi$ is a subset of $\Gamma$.

\begin{lem}
\label{stabilizer}
Let $G$ be a nilpotently vertex-transitive graph. Let $\Gamma$ be a nilpotent subgroup of $\Aut(G)$ with minimum order acting transitively on $V(G)$. Then the stabilizer $\Gamma_{u}$ of every vertex $u \in V(G)$ is contained in $\Phi(\Gamma)$.
\end{lem}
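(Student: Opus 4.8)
The plan is to argue by contradiction, using the minimality of $|\Gamma|$ together with the classical fact that maximal subgroups of finite nilpotent groups are normal. Suppose $\Gamma_u \not\subseteq \Phi(\Gamma)$ for some vertex $u$. Since $\Phi(\Gamma)$ is the intersection of all maximal subgroups of $\Gamma$, some element of $\Gamma_u$ must avoid a maximal subgroup $M$ of $\Gamma$, so $\Gamma_u \not\subseteq M$.

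Now $\Gamma$ is nilpotent, so $M \trianglelefteq \Gamma$ (see \cite{R1995}); hence the set product $\Gamma_u M$ is a subgroup of $\Gamma$, and it properly contains $M$ because $\Gamma_u \not\subseteq M$. By maximality of $M$ this forces $\Gamma_u M = \Gamma$, and taking inverses also $M\Gamma_u = \Gamma$. Next I would check that $M$ acts transitively on $V(G)$: given any $v \in V(G)$, vertex-transitivity of $\Gamma$ yields $\gamma \in \Gamma$ with $\gamma(u) = v$; writing $\gamma = \beta\alpha$ with $\beta \in M$ and $\alpha \in \Gamma_u$ (possible since $\Gamma = M\Gamma_u$), we get $v = \beta\alpha(u) = \beta(u)$, so $v$ lies in the $M$-orbit of $u$. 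Thus $M$ is transitive on $V(G)$. But $M$, being a subgroup of the nilpotent group $\Gamma$, is itself nilpotent, and $|M| < |\Gamma|$ because $M$ is a proper subgroup; this contradicts the choice of $\Gamma$ as a nilpotent transitive subgroup of $\Aut(G)$ of smallest order. Hence $\Gamma_u \subseteq \Phi(\Gamma)$, and since $u$ was arbitrary the conclusion holds for every vertex.

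The only substantive ingredient — and the step I expect to be the main obstacle for anyone who has not internalized it — is the appeal to normality of maximal subgroups in nilpotent groups: this is precisely what turns $\Gamma_u M$ into a subgroup and lets us upgrade "$\Gamma_u \not\subseteq M$ and $M$ maximal" to $\Gamma_u M = \Gamma$. Without nilpotency one would only obtain $\langle \Gamma_u, M\rangle = \Gamma$, which is too weak to conclude that $M$ on its own is transitive. The remaining manipulations — passing between $\Gamma_u M$ and $M\Gamma_u$, the one-line orbit computation, and invoking the minimality hypothesis — are routine.
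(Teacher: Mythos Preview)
Your proof is correct and follows essentially the same approach as the paper's: both hinge on the normality of maximal subgroups in nilpotent groups to conclude that $M\Gamma_u$ is a subgroup, then use the fact that $M$ and $M\Gamma_u$ have the same orbit through $u$ together with the minimality of $|\Gamma|$. The paper phrases it directly (for an arbitrary maximal subgroup $\Sigma$, intransitivity of $\Sigma$ forces $\Sigma\Gamma_u \ne \Gamma$, hence $\Sigma\Gamma_u = \Sigma$), while you argue the contrapositive, but the content is identical.
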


\begin{proof}
Let $\Sigma$ be an arbitrary maximal subgroup of $\Gamma$. Since $\Ga$ is nilpotent and any maximal subgroup of a nilpotent group is normal (see, for example, \cite[5.2.4]{R1995}), we know that $\Sigma$ is normal in $\Ga$. Therefore, $\Sigma\Gamma_{u}$ is a subgroup of $\Gamma$. As a subgroup of the nilpotent group $\Ga$, $\Si$ is also nilpotent, and so $\Si$ must be intransitive on $V(G)$ by the minimality of $\Gamma$. Note that the orbit of $u$ under $\Sigma$ is the same as that under $\Sigma\Gamma_{u}$. Hence $\Sigma\Gamma_{u}$ is intransitive on $V(G)$ and thus a proper subgroup of $\Gamma$. Since $\Sigma$ is a maximal subgroup of $\Gamma$, we have $\Sigma\Gamma_{u}=\Sigma$ and therefore $\Gamma_{u}$ is contained in $\Sigma$. By the arbitrariness of $\Sigma$, we conclude that $\Gamma_{u}$ is contained in $\Phi(\Gamma)$.
\end{proof}

\begin{lem}
\label{index}
Let $\Gamma$ be a nilpotent group and $\Sigma$ a subgroup of $\Gamma$. Let $\delta$ be an element of $\Gamma$ such that $\delta\Sigma\delta^{-1}\neq\Sigma$. If $\Sigma\delta\Sigma=\Sigma\delta^{-1}\Sigma$, then $|\Sigma:\Sigma\cap\delta\Sigma\delta^{-1}|$ is even.
\end{lem}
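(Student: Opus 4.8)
The statement to prove is: in a nilpotent group $\Gamma$ with subgroup $\Sigma$, if $\delta \in \Gamma$ satisfies $\delta\Sigma\delta^{-1} \neq \Sigma$ and $\Sigma\delta\Sigma = \Sigma\delta^{-1}\Sigma$, then the index $|\Sigma : \Sigma \cap \delta\Sigma\delta^{-1}|$ is even. The plan is to reduce to a situation governed by a prime $p$ and then exploit the special structure of nilpotent groups, namely that $\Gamma$ is the direct product of its Sylow subgroups and that every subgroup is \emph{subnormal} (so it is properly contained in its normalizer). First I would observe that the double coset condition $\Sigma\delta\Sigma = \Sigma\delta^{-1}\Sigma$ means $\delta^{-1} = s\delta t$ for some $s, t \in \Sigma$; equivalently $\Sigma\delta$ and $\Sigma\delta^{-1}$ are the same right coset modulo the left action of $\Sigma$, which is exactly the condition that the permutation action of $\Sigma$ (acting on the left) on the orbit of $\Sigma\delta$ in $\Sigma\backslash\Gamma$... — actually cleaner to think in terms of the $(\Sigma,\Sigma)$-double coset $\Sigma\delta\Sigma$ being ``symmetric'' (closed under inversion).

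**The core combinatorial identity.** The size of a double coset is $|\Sigma\delta\Sigma| = |\Sigma| \cdot |\Sigma : \Sigma \cap \delta\Sigma\delta^{-1}|$, so the claim is equivalent to: a symmetric double coset $\Sigma\delta\Sigma$ with $\delta\Sigma\delta^{-1}\neq\Sigma$ has size divisible by $2|\Sigma|$. I expect the key step to be a counting/parity argument: consider the map $x \mapsto x^{-1}$ on the double coset $\Sigma\delta\Sigma$; by the symmetry hypothesis this is an involution of the set $\Sigma\delta\Sigma$, and I would count its fixed points, i.e. the involutions (and the identity, which is not here since $\delta\notin\Sigma$) lying in $\Sigma\delta\Sigma$. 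Breaking $\Sigma\delta\Sigma$ into right cosets of $\Sigma$, the inversion map permutes these $|\Sigma : \Sigma\cap\delta\Sigma\delta^{-1}|$ cosets; I would try to show this induced permutation on the cosets is fixed-point-free, which forces the number of cosets to be even. Concretely, a right coset $\Sigma\gamma \subseteq \Sigma\delta\Sigma$ is fixed by inversion iff $\Sigma\gamma^{-1} = \Sigma\gamma$, iff $\gamma^2 \in \Sigma$, iff $\gamma$ normalizes... no — iff $\gamma\Sigma\gamma^{-1}$ and $\Sigma$ share the coset structure appropriately; here is where nilpotency enters: if such a self-paired coset existed, one shows (using that in a nilpotent group, and in particular in the relevant Sylow $2$-subgroup, a subgroup that is ``almost normalized'' by an element in this doublecoset sense must actually be normalized) that $\delta\Sigma\delta^{-1}=\Sigma$, contradicting the hypothesis — possibly after first reducing to the Sylow $2$-subgroup of $\Gamma$ since oddness of the index is only affected by the $2$-part, and within a $2$-group one can push an induction using $Z(\Gamma) \cap \Sigma$ or the subnormal series of $\Sigma$.

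**Where the real difficulty lies.** The main obstacle, I expect, is making precise the claim that inversion acts without fixed points on the set of right $\Sigma$-cosets inside the double coset — a self-paired coset $\Sigma\gamma$ gives an element $\gamma$ with $\gamma^{-1}\in\Sigma\gamma$, and I need to rule this out \emph{given} $\delta\Sigma\delta^{-1}\ne\Sigma$, which is where nilpotency (not just the symmetry of the double coset) must be used essentially. The natural route is induction on $|\Gamma|$: pass to the Sylow $2$-subgroup $P$ of $\Gamma$ (writing $\Gamma = P \times Q$ with $Q$ of odd order, and noting $\Sigma = (\Sigma\cap P)\times(\Sigma\cap Q)$, $\delta=(\delta_P,\delta_Q)$, so that the index splits as a product and the $2$-part comes entirely from $P$); then inside the $2$-group $P$, take a central subgroup of order $2$ and quotient, tracking how the hypotheses $\delta\Sigma\delta^{-1}\ne\Sigma$ and $\Sigma\delta\Sigma=\Sigma\delta^{-1}\Sigma$ behave — either they persist in the quotient (apply induction) or they collapse, in which case a direct argument in the small group at the bottom of the induction finishes it. I would be prepared for the bookkeeping in the quotient step to be the fiddly part, and for needing the fact that a proper subgroup of a $p$-group is properly contained in its normalizer to handle the base case where $\Sigma$ becomes normal after quotienting but $\delta\Sigma\delta^{-1}\ne\Sigma$ still holds upstairs.
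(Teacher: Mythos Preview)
Your counting argument has a genuine gap at its heart. The map ``inversion'' does \emph{not} induce a well-defined permutation on the right $\Sigma$-cosets inside $\Sigma\delta\Sigma$: if $\Sigma\gamma=\Sigma\gamma'$ with $\gamma'=s\gamma$, then $\Sigma(\gamma')^{-1}=\Sigma\gamma^{-1}s^{-1}$, which need not equal $\Sigma\gamma^{-1}$. (Set-theoretic inversion sends the right coset $\Sigma\gamma$ to the \emph{left} coset $\gamma^{-1}\Sigma$, so what you get is a bijection between right cosets and left cosets in the double coset, not an involution on right cosets.) So the parity step, as written, does not get off the ground.

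There is a deeper problem with the intended strategy even if one could make sense of such an involution. You aim to show there is \emph{no} element $\gamma\in\Sigma\delta\Sigma$ with $\gamma^{2}\in\Sigma$, in order to force fixed-point-freeness. But such an element \emph{always} exists under the hypothesis $\Sigma\delta\Sigma=\Sigma\delta^{-1}\Sigma$: write $\delta^{-1}=\alpha\delta\beta$ with $\alpha,\beta\in\Sigma$ and set $\gamma=\delta\alpha$; then $(\delta\alpha)^{2}=(\delta\beta)^{-1}(\delta\alpha)=\beta^{-1}\alpha\in\Sigma$. So the approach of ruling out such $\gamma$ is hopeless. The paper proceeds in exactly the opposite direction: it \emph{uses} this $\gamma$. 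Since $\gamma\in\delta\Sigma$ one has $\gamma\Sigma\gamma^{-1}=\delta\Sigma\delta^{-1}$, so it suffices to compute $|\Sigma:\Sigma\cap\gamma\Sigma\gamma^{-1}|$. Now take the $2$-part $\mu=\gamma^{s}$ (where $s$ is the odd part of the order of $\gamma$); because $\gamma^{2}\in\Sigma$ and $s-1$ is even one still has $\mu\in\gamma\Sigma$, hence $\mu\Sigma\mu^{-1}=\delta\Sigma\delta^{-1}$. Nilpotency enters only here, and very cheaply: $\mu$ lies in the Sylow $2$-subgroup of $\Gamma$, so it centralises every odd-order element of $\Sigma$, forcing $|\Sigma:\Sigma\cap\mu\Sigma\mu^{-1}|$ to be a power of $2$, and it exceeds $1$ since $\delta\Sigma\delta^{-1}\neq\Sigma$. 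No induction on $|\Gamma|$ and no quotienting by central subgroups is needed; your proposed reduction to the Sylow $2$-subgroup is morally the same move, but the paper achieves it in one line by replacing $\delta$ with the $2$-element $\mu$ rather than by decomposing $\Gamma=P\times Q$ and doing bookkeeping.
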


\begin{proof}
Since $\Sigma\delta\Sigma=\Sigma\delta^{-1}\Sigma$, we have $\delta^{-1}=\alpha\delta\beta$ for some $\alpha,\beta\in\Sigma$. Hence $(\delta\alpha)^2 = (\delta\beta)^{-1}(\delta\alpha) = \beta^{-1}\alpha \in\Sigma$. Let $s$ be the largest odd divisor of the order of $\delta\alpha$ and set $\mu=(\delta\alpha)^s$. Then $\mu$ is contained in the Sylow $2$-subgroup of $\Gamma$. Since $\Gamma$ is a nilpotent group, we have $\mu\gamma=\gamma\mu$ for every element $\gamma$ of $\Gamma$ with odd order. Therefore, every element of $\Sigma$ with odd order is contained in $\mu\Sigma\mu^{-1}$. So $|\Sigma:\Sigma\cap\mu\Sigma\mu^{-1}|$ is a power of $2$. Since $(\delta\alpha)^2 \in\Sigma$ and $s-1$ is even, we have $(\delta\alpha)^{s-1} \in \Sigma$ and hence $\alpha(\delta\alpha)^{s-1}\in \Sigma$. This together with $\mu=(\delta\alpha)^s=\delta\alpha(\delta\alpha)^{s-1}$ implies that $\mu\Sigma\mu^{-1}=\delta\Sigma\delta^{-1}$. Therefore, $|\Sigma:\Sigma\cap\delta\Sigma\delta^{-1}|$ is a power of $2$. Since $\delta\Sigma\delta^{-1}\neq\Sigma$, $\Sigma\cap\delta\Sigma\delta^{-1}$ is a proper subgroup of $\Sigma$ and hence $|\Sigma:\Sigma\cap\delta\Sigma\delta^{-1}| > 1$. It follows that $|\Sigma:\Sigma\cap\delta\Sigma\delta^{-1}|$ is even.
\end{proof}

Let $G$ be a $\Gamma$-vertex-transitive graph, $u$ a vertex of $G$, and $\alpha$ an element of $\Gamma$. Define $G_{u, \alpha}$ to be the spanning subgraph of $G$ whose edges are those edges of $G$ joining $\gamma(u)$ and $(\gamma\beta\alpha)(u)$, for $\gamma\in \Gamma$ and $\beta\in \Gamma_u$. From this definition it follows that $\Gamma$ preserves the incidence structure of $G_{u,\alpha}$. Moreover, if the action of $\Gamma$ on $V(G)\cup E(G_{u,\alpha})$ is faithful, then $G_{u,\alpha}$ is a $\Gamma$-vertex-transitive graph. We call $G_{u,\alpha}$ the spanning subgraph of $G$ \emph{generated by $\alpha$} with reference vertex $u$. Note that $G_{u,\alpha}$ is not an edgeless graph if and only if $\alpha(u)$ is a neighbor of $u$. Note also that, for any vertex $u'$ of $G$, say, $u' = \l(u)$, where $\l \in \Ga$, the subgraph $G_{u', \alpha'}$ generated by $\alpha' = \l \alpha \l^{-1}$ with reference vertex $u'$ is equal to $G_{u, \alpha}$. Therefore, the family of spanning subgraphs $\{G_{u,\alpha} \mid \alpha \in \Ga\}$ of $G$ is independent of the choice of the reference vertex $u$.

\begin{lem}
\label{generating}
Let $G$ be a $\Gamma$-vertex-transitive graph, $u$ a vertex of $G$, and $\alpha$ an element of $\Gamma$, where $\Ga$ is a subgroup of $\Aut(G)$. If $\alpha(u)$ is a neighbor of $u$ in $G$, then the number of neighbors of $u$ in $G_{u, \alpha}$ is $|\Gamma_u:\Gamma_u\cap\alpha\Gamma_u\alpha^{-1}|$ if $\alpha^{-1} \in \Gamma_{u}\alpha\Gamma_{u}$ and $2|\Gamma_u:\Gamma_u\cap\alpha\Gamma_u\alpha^{-1}|$ if $\alpha^{-1} \notin \Gamma_{u}\alpha\Gamma_{u}$.
\end{lem}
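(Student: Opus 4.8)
The plan is to first describe the neighbourhood of $u$ in $G_{u,\alpha}$ explicitly, and then count it by the usual orbit--stabilizer and double-coset bookkeeping. First I would unwind the definition of $G_{u,\alpha}$: an edge of $G_{u,\alpha}$ joins $\gamma(u)$ and $(\gamma\beta\alpha)(u)$ for some $\gamma\in\Gamma$ and $\beta\in\Gamma_u$. For such an edge to be incident with $u$, either $\gamma(u)=u$, which forces $\gamma\in\Gamma_u$ and makes the other end-vertex $\delta\alpha(u)$ with $\delta=\gamma\beta\in\Gamma_u$; or $(\gamma\beta\alpha)(u)=u$, which forces $\gamma\beta\alpha\in\Gamma_u$ and makes $\gamma(u)$ of the form $\delta\alpha^{-1}(u)$ with $\delta\in\Gamma_u$. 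Conversely, since $\alpha(u)$ is a neighbor of $u$ in $G$ and each $\delta\in\Gamma_u$ is an automorphism fixing $u$, the vertices $u=\delta(u)$ and $\delta\alpha(u)$ are adjacent in $G$, so $\delta\alpha(u)$ is genuinely a neighbor of $u$ in $G_{u,\alpha}$, and likewise for $\delta\alpha^{-1}(u)$. Hence the neighbourhood of $u$ in $G_{u,\alpha}$ is precisely $N:=\Gamma_u\alpha(u)\cup\Gamma_u\alpha^{-1}(u)$, where $\Gamma_u\xi(u):=\{\delta\xi(u):\delta\in\Gamma_u\}$.

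Next I would compute $|\Gamma_u\alpha(u)|$. The surjection $\Gamma_u\to\Gamma_u\alpha(u)$, $\delta\mapsto\delta\alpha(u)$, satisfies $\delta\alpha(u)=\delta'\alpha(u)$ if and only if $\alpha^{-1}\delta'^{-1}\delta\alpha\in\Gamma_u$, i.e. $\delta'^{-1}\delta\in\Gamma_u\cap\alpha\Gamma_u\alpha^{-1}$; so its fibres are the left cosets of $\Gamma_u\cap\alpha\Gamma_u\alpha^{-1}$ in $\Gamma_u$, giving $|\Gamma_u\alpha(u)|=|\Gamma_u:\Gamma_u\cap\alpha\Gamma_u\alpha^{-1}|$. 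The same computation gives $|\Gamma_u\alpha^{-1}(u)|=|\Gamma_u:\Gamma_u\cap\alpha^{-1}\Gamma_u\alpha|$, and this equals $|\Gamma_u:\Gamma_u\cap\alpha\Gamma_u\alpha^{-1}|$ because both sides equal $|\Gamma_u\alpha\Gamma_u|/|\Gamma_u|=|\Gamma_u\alpha^{-1}\Gamma_u|/|\Gamma_u|$, the two double cosets having equal cardinality via $g\mapsto g^{-1}$. Call this common value $d$; then $|\Gamma_u\alpha(u)|=|\Gamma_u\alpha^{-1}(u)|=d$.

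Finally I would establish that $\Gamma_u\alpha(u)$ and $\Gamma_u\alpha^{-1}(u)$ are either equal or disjoint, and identify which case holds. If they meet, say $\delta\alpha(u)=\delta'\alpha^{-1}(u)$ with $\delta,\delta'\in\Gamma_u$, then $\delta'^{-1}\delta\alpha$ and $\alpha^{-1}$ map $u$ to the same vertex, so they differ by an element of $\Gamma_u$ on the right, whence $\alpha^{-1}\in\Gamma_u\alpha\Gamma_u$; and in that case $\alpha^{-1}(u)\in\Gamma_u\alpha(u)$, which immediately forces $\Gamma_u\alpha^{-1}(u)=\Gamma_u\alpha(u)$. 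Conversely, if $\alpha^{-1}\in\Gamma_u\alpha\Gamma_u$ then $\alpha^{-1}(u)\in\Gamma_u\alpha(u)$ and again the two sets coincide. Therefore $|N|=d$ when $\alpha^{-1}\in\Gamma_u\alpha\Gamma_u$ and $|N|=2d$ otherwise, which is exactly the assertion. The only real care needed is in keeping the distinction between a vertex merely appearing as an endpoint of some edge of $G_{u,\alpha}$ and a vertex actually being adjacent to $u$ in $G_{u,\alpha}$ (for which the hypothesis that $\alpha(u)$ is a neighbor of $u$ is used), and in the symmetry $|\Gamma_u:\Gamma_u\cap\alpha\Gamma_u\alpha^{-1}|=|\Gamma_u:\Gamma_u\cap\alpha^{-1}\Gamma_u\alpha|$; no deeper difficulty arises.
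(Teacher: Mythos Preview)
Your proposal is correct and follows essentially the same route as the paper's proof: both identify the neighbourhood of $u$ in $G_{u,\alpha}$ as $\{\mu(u):\mu\in\Gamma_u\alpha\Gamma_u\cup\Gamma_u\alpha^{-1}\Gamma_u\}$ and then count it via the standard correspondence between vertices $\mu(u)$ and left cosets $\mu\Gamma_u$, using the double-coset formula $|\Gamma_u\alpha\Gamma_u|/|\Gamma_u|=|\Gamma_u:\Gamma_u\cap\alpha\Gamma_u\alpha^{-1}|$. You simply spell out more of the bookkeeping (the symmetry $|\Gamma_u:\Gamma_u\cap\alpha\Gamma_u\alpha^{-1}|=|\Gamma_u:\Gamma_u\cap\alpha^{-1}\Gamma_u\alpha|$ and the equal-or-disjoint alternative for the two $\Gamma_u$-orbits) that the paper leaves implicit.
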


\begin{proof}
Denote by $N(u)$ the neighborhood of $u$ in $G_{u,\alpha}$. Since $\alpha(u)\in N(u)$, by the definition of $G_{u,\alpha}$, a vertex $w$ is in $N(u)$ if and only if $w=\mu(u)$ for some $\mu\in(\Gamma_u\alpha\Gamma_u)\cup(\Gamma_u\alpha^{-1}\Gamma_u)$. Since for any $\gamma,\gamma' \in\Gamma$, $\gamma(u)=\gamma'(u)$ if and only if $\gamma\Gamma_u=\gamma'\Gamma_u$, it follows that $|N(u)|$ is equal to the number of left cosets of $\Gamma_u$ contained in $(\Gamma_u\alpha\Gamma_u)\cup(\Gamma_u\alpha^{-1}\Gamma_u)$. Therefore, $|N(u)|=|\Gamma_u:\Gamma_u\cap\alpha\Gamma_u\alpha^{-1}|$ if $\Gamma_u\alpha\Gamma_u=\Gamma_u\alpha^{-1}\Gamma_u$, and $|N(u)|=2|\Gamma_u:\Gamma_u\cap\alpha\Gamma_u\alpha^{-1}|$ if $\Gamma_u\alpha\Gamma_u\neq\Gamma_u\alpha^{-1}\Gamma_u$.
\end{proof}

Note that the valency of $G_{u,\alpha}$ can be strictly larger than the number of neighbors of $u$ in $G_{u,\alpha}$ as $G_{u,\alpha}$ may contain parallel edges.

As usual, for a graph $G$ and a set $E \subseteq E(G)$, denote by $G - E$ the spanning subgraph of $G$ obtained from $G$ by deleting all edges in $E$.

\begin{lem}
\label{2factor}
Let $G$ be a $\Gamma$-vertex-transitive graph of odd valency at least $5$, where $\Ga$ is a subgroup of $\Aut(G)$. Suppose that $\Gamma$ contains a central involution $\lambda$ such that, for some vertex $u \in V(G)$, $G_{u,\lambda}$ is a perfect matching of $G$ and $G-E(G_{u,\lambda})$ has two edge-disjoint $2$-factors preserved by $\Gamma$. Then $G$ admits a nowhere-zero $3$-flow.
\end{lem}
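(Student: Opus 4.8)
The plan is to apply Lemma~\ref{pm2f}, with $M:=E(G_{u,\lambda})$ in the role of the perfect matching and $A,B$ the two edge-disjoint $2$-factors of $G-E(G_{u,\lambda})$ preserved by $\Gamma$. First I would record the basic structure of $M$. Since $\lambda$ is a central involution of $\Gamma$, for all $\gamma\in\Gamma$ and $\beta\in\Gamma_u$ we have $\gamma\beta\lambda(u)=\gamma\lambda\beta(u)=\gamma\lambda(u)=\lambda\gamma(u)$, so the edges of $G_{u,\lambda}$ are exactly edges of $G$ joining some vertex $v$ to $\lambda(v)$; moreover, the definition of $G_{u,\lambda}$ shows that \emph{every} edge of $G$ between $v$ and $\lambda(v)$ lies in $G_{u,\lambda}$. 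As $G_{u,\lambda}$ is a perfect matching, it follows that $\lambda$ is fixed-point-free on $V(G)$ (otherwise an edge of $M$ would be a loop), that the unique edge of $M$ at $v$ joins $v$ to $\lambda(v)$, and---crucially---that $v$ and $\lambda(v)$ are joined by exactly one edge of $G$, for every $v\in V(G)$. Since $A$ and $B$ are $2$-factors of $G-M$ they are edge-disjoint from $M$, they are invariant under $\lambda\in\Gamma$, and every vertex of $G$ has odd valency by hypothesis; so the remaining task is to prove that each connected component of $A\cup M$, and likewise of $B\cup M$, is a closed ladder whose rung set is contained in $M$.

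For this I would fix a component $C$ of $A\cup M$, which is cubic because $A$ contributes two edges and $M$ one edge at every vertex. Let $Z$ be the cycle of the $2$-factor $A$ through a chosen vertex $v\in V(C)$, say $Z:v=w_0,w_1,\dots,w_{k-1},w_0$ (a double edge when $k=2$). Because $\lambda$ is an automorphism of $G$ fixing $A$ setwise, $\lambda(Z)$ is the $A$-cycle through $\lambda(v)$, with vertices $\lambda(w_0),\dots,\lambda(w_{k-1})$, and the edges of $M$ in $C$ join $w_i$ to $\lambda(w_i)$. Tracing the component then shows that $V(C)=\{w_0,\dots,w_{k-1}\}\cup\{\lambda(w_0),\dots,\lambda(w_{k-1})\}$ and that $E(C)$ is the union of the edges of $Z$, the edges of $\lambda(Z)$, and the matching $\{w_i\lambda(w_i)\}$. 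If $Z\ne\lambda(Z)$ then $C$ has $2k$ vertices, and, since two distinct edges of $M$ are never parallel, $C$ is precisely the circular ladder $CL_k$ with the two $A$-cycles as its sides and the $M$-edges as its rungs. If $Z=\lambda(Z)$ then $\lambda$ restricts to a fixed-point-free involutory automorphism of the cycle $Z$; such an automorphism exists only when $k$ is even, and it is either the antipodal rotation $w_i\mapsto w_{i+k/2}$ or an ``edge-reflection'' of $Z$. The latter would force the $M$-edge at some vertex to coincide with an edge of $Z\subseteq A$, producing two parallel edges between some $v'$ and $\lambda(v')$, contrary to the previous paragraph; hence $\lambda$ acts antipodally, $k\ge 4$, and $C$ is a M\"obius ladder (with $k/2$ rungs), with $Z$ as its spanning cycle and the $M$-edges as its rungs.

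In either case the rung set of the closed ladder $C$ equals $M\cap E(C)\subseteq M$, and the identical analysis applies to the components of $B\cup M$. Lemma~\ref{pm2f} then yields the desired nowhere-zero $3$-flow of $G$. The step I expect to be the main obstacle is the case analysis of the $\lambda$-action on an $A$-cycle, in particular excluding the edge-reflection possibility; the key leverage there is the fact---deduced from $G_{u,\lambda}$ being a perfect matching---that $v$ and $\lambda(v)$ cannot be joined by two edges of $G$. Verifying that the two surviving configurations really are $CL_k$ and a M\"obius ladder with the rungs in the prescribed place (including the degenerate $k=2$ double-edge situation and the alignment with the definitions of circular and M\"obius ladders in Section~\ref{sec:lem}) is routine, but I would still write it out carefully.
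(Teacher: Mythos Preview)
Your proof is correct and takes a genuinely different route from the paper's argument.

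The paper first disposes of the case where $A$ (or $B$) contains $2$-cycles by observing that $A\cup G_{u,\lambda}$ is then cubic bipartite. In the remaining case it fixes an $A$-cycle $C$, takes its setwise stabiliser $\Sigma\le\Gamma$, passes to the quotient $\Sigma/\Theta$ acting faithfully on $V(C)$, extracts a subgroup $\Pi/\Theta$ regular on $V(C)$, and shows that $\langle\Pi,\lambda\rangle/\Theta$ acts regularly on $V(C\cup\lambda(C))$. This exhibits the component $H$ of $A\cup M$ as a cubic Cayley graph whose connection multiset contains the central involution $\lambda\Theta$, and Lemma~\ref{closedl} then identifies $H$ as a closed ladder. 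Your argument bypasses all of this group-theoretic machinery: you only use that the single involution $\lambda$ permutes the $A$-cycles, and you classify its possible actions on a cycle directly (antipodal rotation versus edge-reflection), killing the edge-reflection with the observation that $G_{u,\lambda}$ being a perfect matching forces exactly one edge of $G$ between any $v$ and $\lambda(v)$. This also lets you treat the $2$-cycle case uniformly as $CL_2$ rather than as a separate bipartite argument. The upshot is that your proof is shorter, more elementary, and does not invoke Lemma~\ref{closedl}; the paper's version, on the other hand, fits more cleanly into the Cayley-graph framework developed in the rest of the article and makes the role of the central involution in the connection set explicit.
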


\begin{proof}
Let $A$ and $B$ be edge-disjoint $2$-factors of $G-E(G_{u,\lambda})$ preserved by $\Gamma$. Then each of them is a union of cycles with the same length (here we treat the graph with two parallel edges between two vertices as a cycle of length $2$).

If $A$ has parallel edges, then each connected component of $A$ is a $2$-cycle. Since $G_{u,\lambda}$ and $A$ have no common edge, each connected component of $A\cup G_{u,\lambda}$ is isomorphic to the graph constructed from a cycle of even length by doubling every edge of one of its two perfect matchings. It follows that $A\cup G_{u,\lambda}$ is a cubic bipartite graph and therefore admits a nowhere-zero $3$-flow. Since $A\cup G_{u,\lambda}$ is a parity subgraph of $G$, we obtain that $G$ admits a nowhere-zero $3$-flow. Similarly, if $B$ has parallel edges, then $G$ admits a nowhere-zero $3$-flow.

From now on we assume that neither $A$ nor $B$ has parallel edges. Let $C$ be an arbitrary connected component of $A$. Then $C$ is a cycle of length $s$ for some integer $s \ge 3$. So $\Aut(C)$ is the dihedral group of order $2s$ of which the cyclic subgroup of order $s$ acts regularly on $V(C)$. If $s$ is odd, then $\Aut(C)$ contains no central involution. If $s$ is even, then $\Aut(C)$ contains a unique central involution which is contained in any subgroup acting regularly on $V(C)$. Let $\Sigma$ be the set of elements of $\Gamma$ that preserve $C$ as a graph. Then $\Sigma$ is a subgroup of $\Gamma$. Since $A$ is preserved by $\Gamma$ and  $\Gamma$ acts transitively on $V(G)=V(A)$, $\Sigma$ acts transitively on $V(C)$. Let $\Theta$ be the kernel of the action of $\Sigma$ on $V(C)$. Then $\Theta$ is normal in $\Sigma$ and $\Sigma/\Theta$ is isomorphic to a vertex-transitive subgroup of $\Aut(C)$. Therefore, $\Sigma$ has a subgroup $\Pi$ such that $\Pi/\Theta$ acts regularly on $V(C)$. Let $M_{C}$ be the set of edges of $G_{u,\lambda}$ with at least one end-vertex in $V(C)$. Take an arbitrary edge $e$ in $M_C$ and let $u$ be an end-vertex of $e$ contained in $V(C)$. Then $\lambda(u)$ is an end-vertex of $e$ and is contained in $V(\lambda(C))$. Set $H=(C\cup\lambda(C)) + M_C$ to be the graph obtained by adding all edges of $M_C$ to $C\cup\lambda(C)$. Then $V(H)=V(C\cup\lambda(C))$. Since $G_{u,\lambda}$ is a perfect matching of $G$,
$M_C$ is a perfect matching of $H$. Set $\Omega=\langle\Pi,\lambda\rangle$. In what follows we will prove that $\Omega$ preserves $H$ and $\Omega/\Theta$ acts regularly on $V(H)$. We distinguish the following two cases.

\smallskip
\textsf{Case 1.} $\lambda(C)=C$.
\smallskip

In this case, we have $\lambda\in \Sigma$. Since $G_{u,\lambda}$ is a perfect matching of $G$, $\lambda$ does not fix any vertex of $C$. Hence $\lambda\notin \Theta$ and it follows that $\lambda\Theta$ is an central involution of $\Sigma/\Theta$. Since $\lambda$ is an involution fixing no vertex in $V(C)$, the length $s$ of $C$ is even. Thus, $\Aut(C)$ contains a unique central involution which is contained in any subgroup acting regularly on $V(C)$.  Since $\Pi/\Theta$ acts regularly on $V(C)$ and $\lambda\Theta$ is an central involution of $\Sigma/\Theta$, we have $\lambda\Theta\in\Pi/\Theta$. Therefore, $\Omega/\Theta=\langle\Pi,\lambda\rangle/\Theta=\Pi/\Theta$. It follows that $\Omega/\Theta$ acts regularly on $V(C)$. Since $H=(C\cup\lambda(C)) + M_C$ and $V(H)=V(C\cup\lambda(C))=V(C)$, $\Omega$ preserves $H$ and $\Omega/\Theta$ acts regularly on $V(H)$.

\smallskip
\textsf{Case 2.} $\lambda(C)\neq C$.
\smallskip

In this case, $\lambda\notin\Pi$. Since $A$ is preserved by $\Gamma$ and $C$ is a connected component of $A$, $\lambda(C)$ is also a connected component of $A$. Therefore, $\lambda(C)$ and $C$ have no common vertex as $\lambda(C)\neq C$.  Since $\lambda$ is a central involution of $\Gamma$ and $\lambda\notin\Pi$, we have $\Omega=\langle\Pi,\lambda\rangle=\Pi\times\langle\lambda\rangle$. Furthermore,  $\alpha(\lambda(v))=\alpha\lambda(v)
=\lambda\alpha(v)=\lambda(\alpha(v))$ for every $\alpha\in \Pi$ and $v\in V(C)$. Hence $\Pi$ preserves $\lambda(C)$. Moreover, since $\Pi$ acts regularly on $V(C)$, $\Pi$ acts regularly on $V(\lambda(C))$. Thus, $\Omega$ preserves $C\cup\lambda(C)$ and $\Omega/\Theta$ acts regularly on $V(C\cup\lambda(C))$. Since $H=(C\cup\lambda(C)) + M_C$ and $V(H)=V(C\cup\lambda(C))$, $\Omega$ preserves $H$ and $\Omega/\Theta$ acts regularly on $V(H)$.

Now we have proved that $\Omega$ preserves $H$ and $\Omega/\Theta$ acts regularly on $V(H)$ in either case above. So $H$ is a Cayley graph on $\Omega/\Theta$. Since $\lambda\Theta(v)$ ($=\lambda(v)$) is adjacent to $v$ for every vertex $v\in V(H)$, the Cayley multiset of this Cayley graph contains the central involution $\lambda\Theta$. Thus, by Lemma \ref{closedl}, $H$ is a closed ladder. It is obvious that the rung set of $H$ is $M_{C}$. By the arbitrariness of $C$, we obtain that every connected component of $A\cup G_{u,\lambda}$ is a closed ladder whose rung set is contained in $G_{u,\lambda}$. Similarly, every connected component of $B\cup G_{u,\lambda}$ is a closed ladder whose rung set is contained in $G_{u,\lambda}$. By Lemma \ref{pm2f}, we conclude that $G$ admits a nowhere-zero $3$-flow.
\end{proof}

\section{Proof of Theorem \ref{nil}}
\label{sec:pf}

Now we are ready to prove Theorem \ref{nil}.
Let $G$ be a nilpotently vertex-transitive graph of valency at least four. By induction on the order of $G$, we aim to prove the statement that $G$ admits a nowhere-zero $3$-flow. This statement is clearly true when $G$ is of order $2$. Now we assume that $G$ is of order greater than $2$ and the statement is true for nilpotently vertex-transitive graph of order less than the order of $G$.

We assume, without loss of generality, that $G$ is connected for otherwise we consider its components. Let $\Gamma$ be a nilpotent subgroup of $\Aut(G)$ with minimum order acting transitively on $V(G)$. By Lemma \ref{faithful}, $\Gamma$ acts faithfully on $V(G)$. Therefore, $\Gamma$ is a vertex-transitive subgroup of the automorphism group of every spanning subgraph of $G$ preserved by $\Gamma$. Since every regular graph of even valency admits a nowhere-zero $3$-flow, we assume that $G$ is of odd valency. Then $G$ is of even order. Since $G$ is $\Gamma$-vertex-transitive, the order of $G$ is a divisor of the order $|\Gamma|$ of $\Ga$. Therefore, $|\Gamma|$ is an even integer and it follows that $\Gamma$ has a nontrivial Sylow $2$-subgroup. Since a nilpotent group is a directed product of its Sylow subgroups \cite[5.2.4]{R1995}, the center of the Sylow $2$-subgroup of $\Gamma$ is contained in the center of $\Gamma$. It is well known \cite[1.6.15]{R1995} that a nontrivial $2$-group has a nontrivial center. Therefore, $\Gamma$ has a central involution, say, $\lambda$. Then $\langle \lambda\rangle$ is a normal subgroup of $\Gamma$. By Lemma \ref{free}, $\langle \lambda\rangle$ acts semiregularly on $V(G)$.

Let $u$ be a fixed vertex of $G$ and $G_{u,\lambda}$ the spanning subgraph of $G$ generated by $\lambda$ with reference vertex $u$. Set $G'=G-E(G_{u,\lambda})$ to be the graph obtained from $G$ by deleting all edges of $G_{u,\lambda}$. Since $G_{u,\lambda}$ is preserved by $\Gamma$, $G'$ is preserved by $\Gamma$. In particular, $G'$ is a $\Gamma$-vertex-transitive graph. Assume that $G_{u,\lambda}$ and $G'$ are of valency $k$ and $\ell$, respectively. Then $G$ is of valency $k+\ell$. Since $\l$ is an involution, every connected component of $G_{u,\lambda}$ is the graph with two vertices joined by $k$ parallel edges. If $k>2$, then $G_{u,\lambda}$ has a spanning cubic bipartite subgraph which is a parity subgraph of $G$ admitting a nowhere-zero $3$-flow. Therefore, $G$ admits a nowhere-zero $3$-flow. In what follows we assume $k\leq2$. Since $k+\ell$ is an odd integer greater than $4$, we have $\ell\geq3$. The rest proof is divided into three cases according to the value of $\ell$.

\smallskip
\textsf{Case 1.} $\ell=3$.
\smallskip

Since $k+\ell\geq5$ and $k\leq2$, we have $k=2$ in this case. Then every connected component of $G_{u,\lambda}$ is isomorphic to the graph with two vertices joined by two parallel edges. It is well known \cite[Theorem 16.14]{BM2008} that every $2$-edge-connected cubic graph has a perfect matching. Since $G'$ is a cubic vertex-transitive graph, every connected component of $G'$ is a $2$-edge-connected cubic graph and it follows that $G'$ has a perfect matching, say, $M$. Then every connected component of $G_{u,\lambda}\cup M$ is isomorphic to the graph constructed from a cycle of even length by doubling every edge of one of its two perfect matchings. It follows that $G_{u,\lambda}\cup M$ is a cubic bipartite graph and thus admits a nowhere-zero $3$-flow. Since $G_{u,\lambda}\cup M$ is a parity subgraph of $G$, it follows that $G$ admits a nowhere-zero $3$-flow.

\smallskip
\textsf{Case 2.} $\ell$ is an odd integer greater than $3$.
\smallskip

By the definition of $G'$, the $\langle\lambda\rangle$-orbit $\tilde{v}$ of every vertex $v\in V(G)$ is an independent set of $G'$. Thus, by Lemma \ref{efree}, $\langle\lambda\rangle$ acts semiregularly on the edge set of $G'$. Hence $\langle\lambda\rangle$ induces a regular quotient $\tilde{G'}$ of $G'$. In particular, $\tilde{G'}$ is of valency $\ell$. By Lemma \ref{vt}, $\tilde{G'}$ is $\Gamma/\Lambda^{*}$-vertex-transitive for some normal subgroup $\Lambda^{*}$ of $\Gamma$. Since $\Gamma$ is nilpotent, the quotient group $\Gamma/\Lambda^{*}$ is nilpotent. Note that the order of $\tilde{G'}$ is less than that of $G$. Thus, by the induction hypothesis, $\tilde{G'}$ admits a nowhere-zero $3$-flow. So, by Lemma \ref{lift}, $G'$ admits a nowhere-zero $3$-flow. Since $G'$ is a parity subgraph of $G$, we conclude that $G$ admits a nowhere-zero $3$-flow.

\smallskip
\textsf{Case 3.} $\ell$ is an even integer greater than $3$.
\smallskip

In this case, $G'$ is a $\Gamma$-vertex-transitive graph of valency an even integer $\ell\geq4$. Since $k+\ell$ is odd and $k\leq2$, we have $k=1$. Hence $G_{u,\lambda}$ is a perfect matching of $G$. If the stabilizer $\Gamma_{u}$ is normal in $\Gamma$, then $G$ is a Cayley graph on the quotient group $\Gamma/\Gamma_{u}$. Note that $\Gamma/\Gamma_{u}$ is also a nilpotent group. Thus, by Lemma \ref{nilCay}, $\Gamma$ admits a nowhere-zero $3$-flow when $\Gamma_{u}$ is normal in $\Gamma$.

Now we assume that $\Gamma_{u}$ is not normal in $\Gamma$. Then, by Lemma \ref{connect}, there exists $\delta\in\Gamma$ such that $\delta(u)$ is adjacent to $u$ and $\delta\Gamma_{u}\delta^{-1}\neq \Gamma_{u}$. Since $\lambda$ is in the center of $\Gamma$, we have $\delta\neq\lambda$. Consider the spanning subgraph $G_{u,\delta}$ of $G$ generated by $\delta$ with reference vertex $u$. By Lemma \ref{generating}, the number $n(G_{u,\delta},u)$ of neighbors of $u$ in $G_{u,\delta}$ is equal to $|\Gamma_u:\Gamma_u\cap\delta\Gamma_u\delta^{-1}|$ if $\Gamma_u\delta\Gamma_u=\Gamma_u\delta^{-1}\Gamma_u$ and $2|\Gamma_u:\Gamma_u\cap\delta\Gamma_u\delta^{-1}|$ if $\Gamma_u\delta\Gamma_u\neq\Gamma_u\delta^{-1}\Gamma_u$. Thus, by Lemma \ref{index}, $n(G_{u,\delta},u)$ is even. Since for every $\beta\in\Gamma_u$ the number of edges joining $u$ and $\beta\delta(u)$ is equal to that joining $u$ and $\delta(u)$, $n(G_{u,\delta},u)$ is a divisor of the valency of $u$ in $G_{u,\delta}$. Since $G_{u,\delta}$ is $\Gamma$-vertex-transitive, $G_{u,\delta}$ is a regular graph of even valency. Set $G''=G' - E(G_{u,\delta})$ to be the graph obtained from $G'$ by deleting all edges of $G_{u,\delta}$. Then $G''$ is of even valency. By Lemma \ref{stabilizer}, $\Gamma_{u}$ is contained in the Frattini subgroup $\Phi(\Gamma)$ of $\Ga$. Since $\Gamma$ is not abelian (as $\Gamma_{u}$ is not normal in $\Gamma$), we have $\Gamma\neq\langle \Gamma_{u},\delta,\lambda\rangle$. Therefore, by Lemma \ref{connect}, $G_{u,\delta}\cup G_{u,\lambda}$ is disconnected.

\smallskip
\textsf{Subcase 3.1.} $G_{u,\delta}$ is of valency greater than $2$.
\smallskip

In this subcase, $G_{u,\delta}\cup G_{u,\lambda}$ is of odd valency no less than $5$. Since $G_{u,\delta}\cup G_{u,\lambda}$ is $\Gamma$-vertex-transitive, every connected component of it is nilpotently vertex-transitive and thus admits a nowhere-zero $3$-flow by our induction hypothesis. It follows that $G_{u,\delta}\cup G_{u,\lambda}$
admits a nowhere-zero $3$-flow. Since $G_{u,\delta}\cup G_{u,\lambda}$ is a parity subgraph of $G$, we conclude that $G$ admits a nowhere-zero $3$-flow.

\smallskip
\textsf{Subcase 3.2.} $G''\cup G_{u,\lambda}$ is a disconnected graph of valency greater than $3$.
\smallskip

Similarly to Subcase 3.1, in this subcase every connected component of $G''\cup G_{u,\lambda}$ is a nilpotently vertex-transitive graph of odd valency no less than $5$ and therefore admits a nowhere-zero $3$-flow. Thus, $G$ admits a nowhere-zero $3$-flow.

\smallskip
\textsf{Subcase 3.3.} Both $G_{u,\delta}$ and $G''$ are of valency $2$.
\smallskip

In this subcase, $G_{u,\delta}$ and $G''$ are edge-disjoint $2$-factors of $G$ preserved by $\Gamma$. Recall that $\lambda$ is a central involution of $\Gamma$ and $G_{u,\lambda}$ is a perfect matching of $G$. Thus, by Lemma \ref{2factor}, $G$ admits a nowhere-zero $3$-flow.

\smallskip
\textsf{Subcase 3.4.} $G_{u,\delta}$ is of valency $2$ and $G''\cup G_{u,\lambda}$ is a connected graph of valency greater than $3$.
\smallskip

In this subcase, since $G$ is the edge-disjoint union of $G_{u,\delta}$ and $G''\cup G_{u,\lambda}$, we have that $G''\cup G_{u,\lambda}$ is of odd valency at least $5$. Note that $G''\cup G_{u,\lambda}$ is a connected $\Gamma$-vertex-transitive graph. Since $\Gamma_{u}$ is not normal in $\Gamma$, by Lemma \ref{connect}, there exists $\mu\in\Gamma$ such that $\mu(u)$ is adjacent to $u$ in $G''$ and $\mu\Gamma_{u}\mu^{-1}\neq \Gamma_{u}$. Similarly to $G_{u,\delta}\cup G_{u,\lambda}$, we have that $G_{u,\mu}\cup G_{u,\lambda}$ is a nilpotently vertex-transitive disconnected graph of odd valency. In particular, $G_{u,\mu}$ is of even valency.  If $G_{u,\mu}$ is of valency greater than $2$, then the problem boils down to Subcase 3.1. If $G_{u,\mu}$ is of valency $2$, then $G_{u,\delta}$ and $G_{u,\mu}$ are edge-disjoint $2$-factors of $G$ preserved by $\Gamma$. Since $G_{u,\lambda}$ is a perfect matching of $G$, by Lemma \ref{2factor}, $G_{u,\delta}\cup G_{u,\mu}\cup G_{u,\lambda}$ admits a nowhere-zero $3$-flow. Since $G_{u,\delta}\cup G_{u,\mu}\cup G_{u,\lambda}$ is a parity subgraph of $G$, it follows that $G$ admits a nowhere-zero $3$-flow.

By mathematical induction, we have completed the proof of Theorem \ref{nil}.

\bigskip

\noindent {\textbf{Acknowledgements}}~~The first author was supported by the China Scholarship Council (No.201808505156) and the Basic Research and Frontier Exploration Project of Chongqing (cstc2018jcyjAX0010). The second author was supported by the Research Grant Support Scheme of The University of Melbourne.

{\small

\end{document}